\definecolor{ccqqqq}{rgb}{0.8,0,0}
\definecolor{qqzzqq}{rgb}{0,0.6,0}
\definecolor{ttttff}{rgb}{0.2,0.2,1}
\newtheorem{thm}{Theorem}
\newtheorem{lem}[thm]{Lemma}
\newtheorem{prop}[thm]{Proposition}
\newtheorem{cor}[thm]{Corollary}
\theoremstyle{definition}
\newtheorem*{dfn}{Definition}
\newtheorem{asm}[thm]{Assumption}
\newtheorem*{note}{Notation}
\theoremstyle{remark}
\newtheorem{rem}{Remark}
\newtheorem*{ex}{Example}
\newcommand{\N}{\mathbb{N}}
\newcommand{\R}{\mathbb{R}}
\newcommand{\Q}{\mathbb{Q}}
\newcommand{\A}{\mathbb{A}}
\newcommand{\B}{\mathbb{B}}
\renewcommand{\S}{\mathbb{S}}
\newcommand{\bOm}{\mathbf{\Omega}}
\newcommand{\bK}{\mathbf{K}}
\newcommand{\bM}{\mathbf{M}}
\newcommand{\bB}{\mathbf{B}}
\newcommand{\bA}{\mathbf{A}}
\newcommand{\mA}{\mathrm{A}}
\newcommand{\mL}{\mathrm{L}}
\newcommand{\mM}{\mathrm{M}}
\newcommand{\va}{\mathbf{a}}
\newcommand{\vb}{\mathbf{b}}
\newcommand{\vc}{\mathbf{c}}
\newcommand{\vx}{\mathbf{x}}
\newcommand{\vy}{\mathbf{y}}
\newcommand{\ve}{\mathbf{e}}
\newcommand{\vg}{\mathbf{g}}
\newcommand{\vk}{\mathbf{k}}
\newcommand{\vl}{\mathbf{l}}
\newcommand{\vO}{\mathbf{0}}
\newcommand{\vz}{\mathbf{z}}
\newcommand{\vp}{\mathbf{p}}
\newcommand{\vq}{\mathbf{q}}
\newcommand{\vu}{\mathbf{u}}
\newcommand{\vv}{\mathbf{v}}
\newcommand{\vw}{\mathbf{w}}
\newcommand{\bmu}{\boldsymbol\mu}
\newcommand{\bvarphi}{\boldsymbol\varphi}
\newcommand{\bpsi}{\boldsymbol\psi}
\newcommand{\bsigma}{\boldsymbol\sigma}
\newcommand{\cX}{\mathcal{X}}
\newcommand{\cM}{\mathcal{M}}
\newcommand{\cC}{\mathcal{C}}
\newcommand{\cK}{\mathcal{K}}
\newcommand{\cP}{\mathcal{P}}
\newcommand{\vol}{\mathrm{vol}}
\newcommand{\op}{\mathrm{p}}
\newcommand{\od}{\mathrm{d}}
\newcommand{\x}{\times}
\newcommand{\st}{\mathrm{s.t.}}
\newcommand{\GM}{\mathrm{GM}}
\newcommand{\norm}[1]{{\left\vert\kern-0.25ex\left\vert\kern-0.25ex\left\vert #1 
    \right\vert\kern-0.25ex\right\vert\kern-0.25ex\right\vert}}
\renewcommand{\emph}[1]{{\it #1}}
\title{Convergence of Lasserre's hierarchy: the general case}
\author{Matteo Tacchi$^{1,2}$}
\begin{document}

\maketitle

\small
$^1$ École Polytechnique Fédérale, CH-1015 Lausanne, Switzerland.

$^2$ R\'eseau de Transport d'Electricit\'e (RTE), 7c place du D\^{o}me, Immeuble Window, 92073 Paris la Défense cedex, France.

\Letter \ \texttt{matteo.tacchi@epfl.ch} 

\normalsize

\begin{abstract}
Lasserre's moment-SOS hierarchy consists in approximating instances of the generalized moment problem (GMP) with moment relaxations and sums-of-squares (SOS) strenghtenings that boil down to convex semidefinite programming (SDP) problems. Due to the generality of the initial GMP, applications of this technology are countless, and one can cite among them the polynomial optimization problem (POP), the optimal control problem (OCP), the volume computation problem, stability sets approximation problems, and solving nonlinear partial differential equations (PDE). The solution to the original GMP is then approximated with finite truncatures of its moment sequence. For each application, proving convergence of these truncatures towards the optimal moment sequence gives valuable insight on the problem, including convergence of the relaxed values to the original GMP's optimal value. This note proposes a general proof of such convergence, regardless the problem one is faced with, under simple standard assumptions. As a byproduct of this proof, one also obtains strong duality properties both in the infinite dimensional GMP and its finite dimensional relaxations.

\begin{center}
\textbf{Keywords}
\end{center}
Infinite dimensional optimization; semidefinite programming; Lasserre hierarchy; numerical analysis; convex optimization; duality theory.

\begin{center}
\textbf{Declarations}
\end{center}
\textbf{Funding:} This work was funded by the French Company Réseau de Transport d'Électricité.
\end{abstract}

\section*{Acknowledgements}

The author would like to thank Didier Henrion and Jean Bernard Lasserre for fruitful discussions.

\newpage

\section{Introduction} \label{sec:intro}

$\begin{array}{llll}\text{Let:} \qquad \qquad & \bullet \ \bK \subset \R^n, & \hspace*{-2em} \bullet \ \varphi_\alpha \in \R[\vx], \alpha \in \A, & \qquad \bullet \ a_\alpha \in \R, \alpha \in \A,\\
& \bullet \ c \in \R[\vx], & \hspace*{-2em} \bullet \ \psi_\beta \in \R[\vx], \beta \in \B, & \qquad \bullet \ b_\beta \in \R, \beta \in \B, \\
& \bullet \ \A,\B \text{ be countable index sets}, \\
\end{array}$

and consider the generalized moment problem (GMP), as stated in \cite[(1.1)]{lasserre}:

\begin{alignat}{4}
\op^\star_\GM := \ && \sup \ & \int c \; d\mu \label{opt:gmpo} \\
&& \st \ & \ \ \ \mu \in \cM(\bK)_+ \nonumber \\
&&&\int \varphi_\alpha \; d\mu = a_\alpha && \qquad \alpha \in \A \nonumber \\
&&& \int \psi_\beta \; d\mu \leq b_\beta && \qquad \beta \in \B, \nonumber
\end{alignat}
{where $\cM(\bK)_+$ denotes the cone of (nonnegative) Borel measures supported on $\bK$, whose dual cone (w.r.t. the weak-$\ast$ topology on measures) is the set $\cC(\bK)_+$ of nonnegative continuous functions on $\bK$.

\begin{ex}[Generality of the problem]
\begin{itemize}
\item[]
\item The $\bK$-moment problem \cite[(3.1),(3.2)]{lasserre} is a particular instance of Problem \eqref{opt:gmpo}:
\begin{alignat*}{4}
\op^\star_\vz := \ && \sup \ & 0 \\
&& \st \ & \ \ \ \mu \in \cM(\bK)_+ \\
&&& \int \vx^\vk \; d\mu(\vx) = z_\vk && \qquad \vk \in \Gamma,
\end{alignat*}
where $\Gamma \subset \N^n$. Feasibility of this problem means existence of a measure $\mu$ whose moments on $\bK$ coincide with the $z_\vk \in \R$, $\vk \in \Gamma$.
\item The polynomial optimization problem (POP) \cite[(4.1)]{pop} is another instance of Problem \eqref{opt:gmpo}:
\begin{alignat*}{3}
\op^\star_f := \ && -\sup \ & -\int f \; d\mu \\
&& \st \ & \ \ \ \mu \in \cM(\bK)_+ \\
&&& \int 1 \; d\mu = 1,
\end{alignat*}
where $f \in \R[\vx]$. One can prove that $\op^\star_f = \inf\{f(\vx) : \vx \in \bK\}$.
\item The volume computation problem \cite{volume} is also an instance of Problem \eqref{opt:gmpo}:
\begin{alignat*}{4}
\op^\star_\bK := \ && \sup \ & \int 1 \; d\mu \\
&& \st \ & \ \ \ \mu \in \cM(\bK)_+ \\
&&& \int p \; d\mu \leq \int_\bB p(\vx) \; \od\vx && \qquad p \in \Q[\bB]_+,
\end{alignat*}
where $\bK \subset \bB \subset \R^n$ such that the $\int_\bB \vx^\vk \; \od\vx$ are known and $\Q[\bB]_+ := \{p \in \Q[\vx] : \forall \vx \in \bB,\ p(\vx) \geq 0\}$. One can prove that $\op^\star_\bK = \vol \, \bK$ is the Lebesgue volume of $\bK$.
\end{itemize}
\end{ex}
}

In 2014, the authors of \cite{joszdual} proved that under specific conditions, the Lasserre hierarchy associated with the particular GMP instance known as the polynomial optimization problem (POP) has the strong duality property, which means that there is no duality gap between the moment relaxations and the sum-of-squares strengthenings that form the moment-SOS hierarchy.

{Moreover, it is proved in \cite[Theorem 5.6(b)]{lasserre} that if the POP has a unique minimizer $\vx^\star$, then the solutions to the corresponding moment hierarchy converge towards the moments of the Dirac measure $\delta_{\vx^\star}$ such that for all Borel set $\bA \subset \R^n$, $\delta_{\vx^\star}(\bA) = \left\{\begin{array}{l}
1 \ \text{if } {\vx^\star} \in \bA \\
0 \ \text{else.}
\end{array}\right.$
}

This technical note is a follow-up that intends to close the subject of strong duality and convergence of the Lasserre hierarchy, by proving a general strong result at the abstraction level of the \emph{generalized moment problem}.

First, it is possible to slightly generalize Problem \eqref{opt:gmpo} by allowing for multiple decision variables:

\begin{alignat}{4}
\op^\star_\GM := \ && \sup \ & \int \vc \cdot d\bmu \label{opt:gmp} \\
&& \st \ & \ \ \ \bmu \in \cM(\bK_1)_+ \x \ldots \x \cM(\bK_N)_+ \nonumber \\
&&&\int \bvarphi_\alpha \cdot d\bmu = a_\alpha && \qquad \alpha \in \A \nonumber \\
&&& \int \bpsi_\beta \cdot d\bmu \leq b_\beta && \qquad \beta \in \B. \nonumber
\end{alignat}
where $\B$ is finite, $\vc, \bvarphi_\alpha, \bpsi_\beta \in \R[\vx_1]\x\ldots\x\R[\vx_N]$ and
$$ \int \vc \cdot d\bmu := \sum_{i=1}^N \int c_i \; d\mu_i. $$

\begin{rem}[Finitely many inequality constraints]
\begin{itemize}
\item[]
\end{itemize}
The introduction of multiple decision variables allows one to restrict to finitely many inequality constraints. Indeed, in the context of the moment-SOS hierarchy, infinite dimensional inequality constraints are always under the form $\nu - \Phi \bmu \in \cM(\bOm)_+$ for some compact $\bOm \subset \R^n$ and some bounded operator $\Phi :  \cM(\bK_1) \x \ldots \x \cM(\bK_N) \to \cM(\bOm)$ between vector spaces of signed measures. This means that these infinite dimensional inequality constraints can be recast as equality constraints with an additional decision variable $\bar{\mu} \in \cM(\bOm)_+$: $\Phi\bmu + \bar{\mu} = \nu$. For example, the aforementioned volume computation problem can be recast as
\begin{alignat*}{4}
\op^\star_\bK = \ && \sup \ & \int 1 \; d\mu \\
&& \st \ & \ \ \ \mu \in \cM(\bK)_+, \quad \ \ \bar{\mu} \in \cM(\bB)_+ \\
&&& \int \vx^\vk \; d\mu(\vx) + \int \vx^\vk \; d\bar{\mu}(\vx) = \int_\bB \vx^\vk \; \od\vx && \qquad \vk \in \N^n.
\end{alignat*}
\end{rem}

{\begin{ex}[A most general extension]
\begin{itemize}
\item[]
\end{itemize}
Problem \eqref{opt:gmp} covers all existing applications of the Lasserre hierarchy so far, including for instance:
\begin{itemize}
\item The optimal control problem (OCP) \cite{trelat},
\item The controlled region of attraction problem \cite{roa} (as well as its inner approximation version \cite{inneroa} and its extension \cite{outmpi} to invariant sets),
\item The reachable set problem \cite{reachable} (as well as its continuous time counterpart, formulated as the maximal positively invariant set inner approximation problem in \cite{inmpi}),
\item The nonlinear conservation PDE problem \cite[(45),(46)]{burgers}.
\end{itemize}
\end{ex}

\begin{rem}[Duality]
\begin{itemize}
\item[]
\end{itemize}
In practice, instances of Problem \eqref{opt:gmp} are formulated so that $$\forall t_1,t_2 \in \R, \alpha_1,\alpha_2,\alpha_3 \in \A, \quad t_1 \, \bvarphi_{\alpha_1} + t_2 \, \bvarphi_{\alpha_2} = \bvarphi_{\alpha_3} \Rightarrow t_1 \, a_{\alpha_1} + t_2 \, a_{\alpha_2} = a_{\alpha_3}$$ (and same for the $\bpsi_\beta$ and $b_\beta$). This compatibility with the linearity of optimization constraints allows to write a synthetic dual formulation to \eqref{opt:gmp}:
\begin{alignat}{3}
\od^\star_\GM := \ && \inf \ & \vx \cdot \va + \vy \cdot \vb \label{opt:gmpd} \\
&& \st \ & \vv + \vw - \vc \in \cC(\bK_1)_+ \x \ldots \x \cC(\bK_N)_+ \nonumber \\
&&& \vv = \sum_{\alpha \in \A} x_\alpha \, \bvarphi_\alpha \qquad \qquad \vx = (x_\alpha)_{\alpha \in \A} \in \R^\A \ \st \ \{\alpha \in \A : x_\alpha \neq 0\} \text{ is finite} \nonumber \\
&&& \vw = \sum_{\beta \in \B} y_\beta \, \bpsi_\beta \qquad \qquad \vy = (y_\beta)_{\beta \in \B} \in (\R_+)^\B \nonumber 
\end{alignat}
where $\va := (a_\alpha)_{\alpha\in\A}$ and $\vb := (b_\beta)_{\beta \in \B}$. In words, the decision variable $\vv$ is in the vector space spanned by the $\bvarphi_\alpha$ and $\vw$ is in the convex cone spanned by the $\bpsi_\beta$. $\vx$ (resp. $\vy$) is the Lagrange multiplier corresponding to the equality (resp. inequality) constraints in \eqref{opt:gmp}.
\end{rem}
}

The aim of this technical note is to state and prove a most general convergence theorem for the hierarchy corresponding to Problem \eqref{opt:gmp} {(and its dual \eqref{opt:gmpd})}. The Lasserre hierarchy technology is most often deployed under the following standard working assumption (see \cite[Assumptions 1 \& 2]{joszdual}):

\begin{asm}[Ball constraints] \label{asm:ball}
\begin{itemize}
\item[]
\end{itemize}
Suppose that there exists
$\vg_1 \in \R[\vx_1]^{m_1},\ldots,\vg_N \in \R[\vx_N]^{m_N}$ such that for $i \in {\N^\star_N} := \{1,\ldots,N\}$,
$$g_{i,1}(\vx_i) = 1 \quad ; \quad g_{i,m_i}(\vx_i) = 1 - {\|}\vx_i{\|}^2$$
$$\bK_i = \{\vx_i \in \R^{n_i} : \vg_i(\vx_i) \geq \vO\},$$
{where if $\vx = (x_1,\ldots,x_n) \in \R^n$, ${\|}\vx{\|} = \sqrt{x_1^2 + \ldots + x_n^2}$.}
\end{asm}

\begin{rem}[Compactness] \label{rem:ball}
\begin{itemize}
\item[]
\end{itemize}
Up to rescaling of the $\bK_i$'s, it is always possible to enforce Assumption \ref{asm:ball}, as soon as they are compact basic semialgebraic sets.

Indeed let $\vg \in \R[\vx]^m$, $\bK := \{\vx \in \R^n : \vg(\vx) \geq \vO\}$. For $R > 0$, define
$$\bK_R := \{\vx \in \R^n : \vg(\vx) \geq \vO, {\|}\vx{\|}^2 \leq R^2\} = \bK \cap \bB_R,$$
where $\bB_R := \{\vx \in \R^n : {\|}\vx{\|} \leq R\}$ is the ball of radius $R$. In such setting, if $\bK$ is compact then it is bounded, so that there exists $R_0 > 0$ s.t. $\forall R \geq R_0$, $\bK \subset \bB_R$, and thus $\bK = \bK_R$. This shows that if $\bK$ is compact, it is always possible to add a redundant ball constraint to its description.
\end{rem}

\begin{rem}[Archimedean property] \label{rem:archi}
\begin{itemize}
\item[]
\end{itemize}
More precisely, the standard condition for applying the moment-SOS hierarchy is actually that the $\bK_i$'s have the Archimidean property, which we explain here. Considering the cone
$$\Sigma[\vx] := \{p_1^2 + \ldots + p_k^2 : k\in\N, p_1,\ldots,p_k \in \R[\vx]\}$$
of sums of squares of polynomials, we define the \textit{quadratic module} of $\vg \in \R[\vx]^m$ as
$$\Sigma(\vg) := \{\bsigma \cdot \vg : \bsigma \in \Sigma[\vx]^m \}.  $$
Then, $\bK := \{\vx \in \R^n : \vg(\vx) \geq \vO\}$ is said to have the \textit{Archimedean property} if
$$ \exists \ R > 0 \ \ \st \ R^2 - \|\cdot\|^2 \in \Sigma(\vg). $$
It is worth noticing, as SOS polynomials are by definition nonnegative, that by definition of $\bK$ one has $\Sigma(\vg) \subset \R[\bK]_+$ and thus the Archimedean property implies $\bK \subset \bB_R$ for some $R > 0$..
\end{rem}

\begin{rem}[Positivstellensatz] \label{rem:psatz}
\begin{itemize}
\item[]
\end{itemize}
The key instrument for the moment-SOS hierarchy is Putinar's Positivstellensatz \cite[Theorem 1.3]{putinar}, which has a dual counterpart under the form of Putinar's Lemma \cite[Lemma 3.2]{putinar}. If the Archimedean property holds, then Putinar's Lemma gives a sufficient hierarchy of LMI conditions for a multi-indexed sequence $\vz = (z_\vk)_{\vk \in \N^n}$ to represent the moments of a measure $\mu \in \cM(\bK_i)_+$, which will be instrumental for the moment hierarchy definition and convergence.

Conversely, under the same assumption, Putinar's Positivstellensatz ensures that $\Sigma(\vg_i)$ is dense in $\cC(\bK_i)_+$ w.r.t. the uniform topology, so that
\begin{alignat*}{3}
\od^\star_\GM := \ && \inf \ & \vx \cdot \va + \vy \cdot \vb 
\\
&& \st \ & \vv + \vw - \vc \in \Sigma(\vg_1)_+ \x \ldots \x \Sigma(\vg_N)_+ \nonumber \\
&&& \vv = \sum_{\alpha \in \A} x_\alpha \, \bvarphi_\alpha \qquad \qquad \vx = (x_\alpha)_{\alpha \in \A} \in \R^\A \ \st \ \{\alpha \in \A : x_\alpha \neq 0\} \text{ is finite} \nonumber \\
&&& \vw = \sum_{\beta \in \B} y_\beta \, \bpsi_\beta \qquad \qquad \vy = (y_\beta)_{\beta \in \B} \in (\R_+)^\B \nonumber 
\end{alignat*}

Then, the isomorphism between sums of squares and positive semidefinite matrices, provided by \cite[Proposition 2.1]{lasserre}, allows to parameterize inequality constraints as LMI constraints.

Note that this means that when restricting the constraints to bounded degree in the hierarchy, only a finite number of constraints need to be checked.
\end{rem}

\section{Notations and theorem statement}

We briefly recall the usual notations and definitions that are used in the Lasserre hierarchy framework. Then, we proceed to introduce the last assumptions that will be needed for the proof of our theorem.

The basic notions that are used to formulate the moment hierarchy are the Riesz functional and the localizing matrix.

\begin{dfn}[Riesz functional]\label{dfn:rieszfun}
\begin{itemize}
\item[]
\end{itemize}
Let $\vz := (z_\vk)_{\vk \in \N^n} \in \R^{\N^n}$ be a real sequence. We define, for $p(\vx) := \sum_{|\vk|\leq d} p_\vk \ \vx^\vk \in \R[\vx]$,
$$ \mL_\vz(p) := \sum_{|\vk|\leq d} p_\vk \ z_\vk, $$
{where for $\vk = (k_1,\ldots,k_n) \in \N^n$, $|\vk| := k_1 + \ldots + k_n$.}

The linear map $\mL_\vz : p \mapsto \mL_\vz(p)$ is called the \emph{Riesz functional} of $\vz$.
\end{dfn}

\begin{rem}[Link between $\mL_\vz$ and integrals] \label{rem:rieszfun}
\begin{itemize}
\item[]
\end{itemize}
If $\vz = (z_\vk)_{\vk\in\N^n} \in \R^{\N^n}$ and $\mu \in \cM(\bK)_+$ are such that
$$ \forall \vk \in \N^n \qquad z_\vk = \int \vx^\vk \; d\mu, $$
then by definition for all $p \in \R[\vx]$
$$ \int p \; d\mu = \mL_\vz(p). $$
\end{rem}

\begin{note}[Bounded multi-indices]
\begin{itemize}
\item[]
\end{itemize}
For $n,d \in \N$, we use the following notations:
$$ \N^n_d := \{\vk \in \N^n : |\vk| \leq d \} \quad {\text{and}} \quad N_{n,d} := {\mathrm{card}(\N^n_d)} = \binom{n+d} {n}, $$
so that the space {$\R_d[\vx]$} of polynomials in $n$ variables with degree at most $d$ {is isomorphic to $\R^{\N^n_d}$ and $\R^{N_{n,d}}$.}
\end{note}

\begin{dfn}[Localizing matrix] \label{dfn:locmat}
\begin{itemize}
\item[]
\end{itemize}
Let $d,d_g \in \N$, $g \in \R_{d_g}[\vx]$. Let $\ve_d(\vx) := (e_i(\vx))_{1 \leq i \leq N_{n,d}}$ be a base of $\R_d[\vx]$. Let $\vz = (z_\vk)_{|\vk| \leq 2d+d_g} \in \R^{\N^n_{2d+d_g}}$.

The \emph{degree $d$ localizing matrix} $\mM_d(g \ \vz)$ 
of $\vz$ in $g$ 
is defined as the size $N_{n,d}$ matrix representation in base $\ve_d(\vx)$ of the bilinear application
$$ (p,q) \in \R_d[\vx]^2 \longmapsto \mL_\vz(g \ p \ q).$$
\end{dfn}

The localizing matrix is defined so that if $p(\vx) = \vp \cdot \ve_d(\vx)$ and $q(\vx) = \vq \cdot \ve_d(\vx)$, $\vp,\vq \in \R^{N_{n,d}}$, then
$$ \mL_\vz(g \ p \ q) = \vp^\top \mM_d(g \ \vz) \ \vq \qquad {\text{and}} \qquad \mL_\vz(g \ p^2) = \vp^\top \mM_d(g \ \vz) \ \vp. $$

\begin{note}[Positive semidefinite matrices]
\begin{itemize}
\item[]
\end{itemize}
We denote by:
\begin{itemize}
\item $\S^n := \{\mM \in \R^{n\x n} : \mM^\top = \mM \}$ the space of symmetric matrices,
\item $\S^n_+ := \{\mM \in \S^n : \forall \vx \in \R^n, \vx^\top \ \mM \ \vx \geq 0 \}$ the closed convex cone of positive semidefinite matrices,
\item if $\mM \in \S^n$, $\mM \succeq 0 \Longleftrightarrow \mM \in \S^n_+$.
\end{itemize}
\end{note}

These definitions allow to formulate the moment hierarchy associated to Problem \eqref{opt:gmp}:

\begin{alignat}{4}
\op^d_\GM := \ && \sup \ & \sum_{i=1}^N \mL_{\vz_i}(c_i) \label{opt:mom} \\
&& \st \ & \ \vz_i \in \R^{\N^{n_i}_{2d}} && \qquad i \in \N^\star_N \nonumber \\
&&& \ \mM_{d-d_{ij}}(g_{i,j} \ \vz_i) \succeq 0 && \qquad i \in \N^\star_N, j \in \N^\star_{m_i} \nonumber \\
&&& \ \sum_{i=1}^N \mL_{\vz_i}(\varphi_{\alpha,i}) = a_\alpha && \qquad \alpha \in \A_d \nonumber\\
&&& \ \sum_{i=1}^N \mL_{\vz_i}(\psi_{\beta,i}) \leq b_\beta && \qquad \beta \in \B_d, \nonumber
\end{alignat}
where $d_{ij} = \lceil {\deg \,} g_{i,j} / 2 \rceil$, and $$\A_d := \{\alpha \in \A : \forall i \in \N^\star_N, {\deg \,} \varphi_{\alpha,i} \leq 2d \},$$ $$\B_d := \{\beta \in \B : \forall i \in \N^\star_N, {\deg \,} \psi_{\beta,i} \leq 2d \}$$
are taken finite (which is possible using linearity of the Riesz functional, {Remark \ref{rem:psatz}} and finite dimensionality of $\R_d[\vx] = \{p\in\R[\vx] : \deg \, p \leq d\}$).

{A vector $\vz_i \in \R^{\N^{n_i}_{2d}}$ that is feasible for problem \eqref{opt:mom} is then called a \emph{pseudo-moment sequence}.
}

{\begin{rem}[SOS hierarchy]
\begin{itemize}
\item[]
\end{itemize}
The dual to the moment hierarchy is called the SOS hierarchy, and is written as follows:
\begin{alignat}{3}
\od^d_\GM := \ && \inf \ & \vx \cdot \va + \vy \cdot \vb \label{opt:sos} \\
&& \st \ & \vv + \vw - \vc \in \Sigma(\vg_1) \x \ldots \x \Sigma(\vg_N) \nonumber \\
&&& \vv = \sum_{\alpha \in \A_d} x_\alpha \, \bvarphi_\alpha \qquad \qquad \vx = (x_\alpha)_{\alpha \in \A_d} \in \R^{\A_d} \nonumber \\
&&& \vw = \sum_{\beta \in \B_d} y_\beta \, \bpsi_\beta \qquad \qquad \vy = (y_\beta)_{\beta \in \B_d} \in (\R_+)^{\B_d}, \nonumber
\end{alignat}
where $\va := (a_\alpha)_{\alpha\in\A_d}$ and $\vb := (b_\beta)_{\beta \in \B_d}$.
\end{rem}
}

\begin{rem}[Minimal degree for the hierarchy]
\begin{itemize}
\item[]
\end{itemize}
It appears in \eqref{opt:mom} that $\op^d_\GM$ is only defined for $d \geq d_0$, where
$$ d_0 := \max_{i \in \{1,\ldots,N\}} \max_{j \in \{1,\ldots,m_i\}} \max (\lceil {\deg \,} c_i / 2 \rceil , d_{ij}). $$
\end{rem}

We are finally able to formulate the last needed assumptions as well as our main theorem. First, convergence in the moment hierarchy is often obtained through an additional assumption on the mass of the involved measures:

\begin{asm}[Uniformly bounded mass] \label{asm:bound}
\begin{itemize}
\item[]
\end{itemize}
For $i \in \{1,\ldots,N\}$, denote by $\vz_i \in \R^{\left(\N^{n_i}_{2d}\right)}$ the pseudo-moment sequence that represents $\mu_i$ in the Lasserre hierarchy, and suppose that there exists $C_i > 0$ s.t. if $\vz_i$ is feasible for the degree $d$ relaxation of problem \eqref{opt:gmp}, then
$$ z_{i,\vO} \leq {C_i}. $$
\end{asm}

Eventually, {our proof of convergence consists in proving that our hierarchy of pseudo-moment sequences has at least one accumulation point.} So as {to deduce actual convergence}, we need a unique candidate for {such accumulation point}:

\begin{asm}[Existence of a unique optimal solution] \label{asm:unique}
\begin{itemize}
\item[]
\end{itemize}
Suppose that there exists a unique $\bmu^\star \in \cM(\bK_1)_+\x\ldots\x\cM(\bK_N)_+$ feasible and optimal for problem \eqref{opt:gmp}:
$$ \begin{cases} \forall \alpha \in \A, \int \bvarphi_\alpha \cdot d\bmu^\star = a_\alpha \\ \forall \beta \in \B, \int \bpsi_\beta \cdot d\bmu^\star \leq b_\beta
\end{cases} \qquad {\text{and}} \qquad \int \vc \cdot d\bmu^\star = \op^\star_\GM. $$
\end{asm}

\begin{rem}[The question of uniqueness]
\begin{itemize}
\item[]
\end{itemize}
Assumption \ref{asm:unique} is crucial for the proof of convergence. However, if one removes it, \emph{existence} of the optimal $\bmu^\star$ could still be obtained from Assumption \ref{asm:bound}, through an infinite dimensional strong duality proof similar to what we present in the next section (which would bring no additional theoretical insight, so that we do not display it). The actual stake of this assumption is indeed the \emph{uniqueness} assumption. Note that in most of the moment-SOS hierarchy applications, the GMP is designed in a way that enforces Assumption \ref{asm:unique}.
\end{rem}

Our contribution consists in stating and proving a general theorem for convergence of the Lasserre hierarchy \eqref{opt:mom} corresponding to problem \eqref{opt:gmp}:

\begin{thm}[Convergence of the general Lasserre hierarchy] \label{thm:main}
\begin{itemize}
\item[]
\end{itemize}
Under Assumptions \ref{asm:ball},  \ref{asm:bound} and \ref{asm:unique}, there exist sequences $\left(\vz_{i}^d\right)_{d \geq d_0}$ of feasible pseudo-moment sequences for the moment hierarchy {\eqref{opt:mom}} s.t. $\sum_{i=1}^N\mL_{\vz^d_i}(c_i) = \op^d_\GM${. Then,} for all $\vk \in \N^n$,
$$ z^d_{i,\vk} \underset{d\to\infty}{\longrightarrow} \int \vx^\vk \; d\mu_i^\star(\vx). $$ 
In particular, one has $\op^d_\GM \underset{d\to\infty}{\longrightarrow} \op^\star_\GM$.

Moreover, this automatically yields strong duality $\op^d_\GM = \od^d_\GM$ {and} $\op^\star_\GM = \od^\star_\GM$, where $\od^\star_\GM$ and $\od^d_\GM$ are the values of the duals {\eqref{opt:gmpd} and \eqref{opt:sos}}, respectively, so that $\od^d_\GM \underset{d\to\infty}{\longrightarrow} \od^\star_\GM$.
\end{thm}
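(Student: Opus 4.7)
The plan is to construct, at each level $d\geq d_0$, an optimal pseudo-moment sequence, extract a coordinatewise convergent subsequence, identify the limit with the moments of a measure via the Putinar calculus of Remark \ref{rem:psatz}, show this measure is feasible and optimal for \eqref{opt:gmp}, and finally invoke Assumption \ref{asm:unique} to upgrade subsequential convergence to convergence of the full sequence. Strong duality will come at the end by sandwiching the primal and dual hierarchy values.

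First I would exploit Assumption \ref{asm:ball} to derive uniform boundedness. The LMI $\mM_{d-1}\bigl((1-\|\vx_i\|^2)\vz_i\bigr)\succeq 0$, combined with $\mM_d(\vz_i)\succeq 0$ and the mass bound $z_{i,\vO}\leq C_i$ from Assumption \ref{asm:bound}, implies via a standard Cauchy--Schwarz bound on the Riesz functional together with an induction on $|\vk|$ the uniform estimate $|z^d_{i,\vk}|\leq C_i$ for every admissible $\vk$ and $d$. Compactness of the feasible set, combined with continuity of the objective in \eqref{opt:mom}, delivers existence of an optimal $\vz_i^d$ at each finite level; the same uniform bound then enables a Cantor diagonal extraction of a subsequence $(d_\ell)_\ell$ along which $\vz_i^{d_\ell}$ converges coordinatewise to some $\vz_i^\star\in\R^{\N^{n_i}}$.

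Because each LMI involves only finitely many entries of $\vz_i^d$ and $\S^n_+$ is closed, the limit $\vz_i^\star$ satisfies $\mM_k(g_{i,j}\vz_i^\star)\succeq 0$ for every $k$ and every $j$. The Archimedean property then activates Putinar's Positivstellensatz (Remark \ref{rem:psatz}), which produces a measure $\mu_i^\star\in\cM(\bK_i)_+$ with $z_{i,\vk}^\star=\int\vx_i^\vk\,d\mu_i^\star$ for all $\vk$. Since the $\bvarphi_\alpha$, $\bpsi_\beta$ and $\vc$ have fixed degrees, all equality and (finitely many) inequality constraints of \eqref{opt:gmp} pass to the limit, so $\bmu^\star:=(\mu_i^\star)_i$ is feasible. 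As \eqref{opt:mom} is a relaxation of a supremum one has $\op^d_\GM\geq\op^\star_\GM$, hence letting $\ell\to\infty$ yields $\int\vc\cdot d\bmu^\star\geq\op^\star_\GM$; feasibility gives the reverse inequality, so $\bmu^\star$ is optimal and, by Assumption \ref{asm:unique}, coincides with the global optimum. Since every subsequential accumulation point must be this same measure, the whole sequence $(\vz_i^d)_d$ converges coordinatewise to the moments of $\mu_i^\star$, and in particular $\op^d_\GM\to\op^\star_\GM$.

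The strong duality statements are where I expect the main obstacle. Weak duality together with the inclusion $\Sigma(\vg_i)\subset\cC(\bK_i)_+$ yields the chain
\[ \op^\star_\GM\leq\op^d_\GM\leq\od^d_\GM \qquad\text{and}\qquad \op^\star_\GM\leq\od^\star_\GM\leq\od^d_\GM, \]
so it suffices to show $\od^d_\GM\to\op^\star_\GM$. The strategy is to first establish the finite-level equality $\op^d_\GM=\od^d_\GM$ by standard conic SDP duality: the ball constraint in Assumption \ref{asm:ball} delivers a Slater-type interior primal point (for instance, by averaging the pseudo-moments of $\bmu^\star$ with those of the normalized Lebesgue measure on an interior ball, adjusted within the affine constraints), which forbids a finite-dimensional duality gap. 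Once $\op^d_\GM=\od^d_\GM$ holds at each $d$, the primal convergence just established forces $\od^d_\GM\to\op^\star_\GM$, and then the sandwich squeezes $\od^\star_\GM$ between $\op^\star_\GM$ and $\op^\star_\GM$, yielding $\op^\star_\GM=\od^\star_\GM$ as the promised byproduct.
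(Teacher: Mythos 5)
Your convergence argument is essentially the paper's: the uniform bound $|z^d_{i,\vk}|\leq C_i$ from the ball constraint plus the mass bound (the paper's Lemma \ref{lem:locmat}), a compactness extraction of a coordinatewise limit (the paper uses Banach--Alaoglu on $\ell^\infty(\N^n)$, you use a diagonal argument -- equivalent here), passage of the LMIs and affine constraints to the limit, representation of the limit sequence by a measure, optimality by the sandwich $\op^d_\GM\geq\op^\star_\GM$, and identification via Assumption \ref{asm:unique}. One small imprecision: the tool that turns $\vz^\star_i$ into a measure is Putinar's \emph{Lemma} (the moment-side dual statement, \cite[Lemma 3.2]{putinar}), not the Positivstellensatz itself; Remark \ref{rem:psatz} contains both, but you should invoke the right half.

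The genuine gap is in your proof of the finite-level equality $\op^d_\GM=\od^d_\GM$. You propose a Slater-type argument: a strictly feasible point of the moment relaxation obtained by ``averaging the pseudo-moments of $\bmu^\star$ with those of the normalized Lebesgue measure on an interior ball, adjusted within the affine constraints.'' This fails in the generality of Theorem \ref{thm:main}. First, $\bK_i$ may have empty interior (a variety, or a finite set), in which case no measure supported on $\bK_i$ has positive definite moment matrices of all relevant orders, so no interior ball and no strictly feasible pseudo-moment vector exist. Second, even when $\bK_i$ is full-dimensional, the equality constraints $\mL_{\vz_i}(\varphi_{\alpha,i})=a_\alpha$ confine the feasible set to an affine subspace that may meet the PSD cones only on their boundary; ``adjusting within the affine constraints'' is precisely the step that cannot be guaranteed, and it is not implied by Assumptions \ref{asm:ball}--\ref{asm:unique}. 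The paper avoids interior points altogether: it invokes Barvinok's conic duality criterion (\cite[Chapter IV: Theorem (7.2), Lemma (7.3)]{barvinok}), which requires only that the image cone $\cK_d$ be closed. Closedness is obtained by showing that the LMI cone $\cX_d$ has a compact convex base $\cP_d=\{\vz\in\cX_d: z_\vO=1\}$ (compactness is exactly Lemma \ref{lem:locmat} plus Assumption \ref{asm:ball}), and by verifying the kernel condition that $\mA_d\vz=\vO$, $\mL_\vz(c)=0$, $\vz\in\cX_d$ force $\vz=\vO$ -- a step that uses Assumption \ref{asm:bound} in an essential way (adding $t\,\vz$ to a feasible point keeps the mass bounded for all $t\geq 0$, forcing $z_\vO=0$). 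You would need to replace your Slater step by this closedness argument, or by some equivalent device, for the strong duality claims to hold at the stated level of generality; once $\op^d_\GM=\od^d_\GM$ is secured, your concluding sandwich for $\op^\star_\GM=\od^\star_\GM$ is the same as the paper's.
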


{
\begin{rem}[Contribution w.r.t. \cite{joszdual} and \cite{lasserre}]
\begin{itemize}
\item[]
\end{itemize}
Whereas \cite{joszdual} and \cite[Theorem 5.6(b)]{lasserre} respectively prove strong duality in the moment-SOS hierarchy and convergence of the resulting pseudo-moments towards the unique solution to the original POP, this contribution extends these results from the particular POP problem to \emph{all} possible instances of the GMP, including the various examples mentioned in introduction, which should facilitate future contributions in the field of moment-SOS hierarchies.

Such generality is obtained by enforcing, in addition to the standard Assumptions \ref{asm:ball} and \ref{asm:unique} that were already introduced in \cite{joszdual} and \cite[Theorem 5.6(b)]{lasserre}, of the boundedness Assumption \ref{asm:bound} that guarantees strong duality in the GMP, a property that is trivially established in the context of POP but can become difficult to check in other situations, such as \cite{reachable} and \cite{inmpi}.
\end{rem}
}

\section{Proof of Theorem \ref{thm:main}}

For the sake of simplicity and without loss of generality,
we work on the simple case \eqref{opt:gmpo} where $N = 1$, with { $\B = \varnothing$\footnote{Indeed as $\B$ is finite, there exists a finite value of $d$ that captures all inequality constraints; arbitrarily setting this value to $0$ does not change our proof for convergence.}, whose} corresponding hierarchy { writes}:

\begin{alignat}{4}
\op^d_\GM := \ && \sup \ & \mL_\vz(c) \label{opt:momN} \\
&& \st \ & \vz \in \R^{\N^n_{2d}} \nonumber \\
&&& \mM_{d-d_j}(g_j \ \vz) \succeq 0 && \qquad j \in \N^\star_m \nonumber \\
&&& \mL_\vz(\varphi_\alpha) = a_\alpha && \qquad \alpha \in \A_d. \nonumber
\end{alignat}

All the proofs are the same for $N > 1$, at the price of additional notations that do not bring any theoretical insight, so that we stick to the case $N=1$.
We first prove an easy lemma that gives all its importance to Assumption \ref{asm:ball}:

\begin{lem}[Pseudo moment sequences boundedness] \label{lem:locmat}
\begin{itemize}
\item[]
\end{itemize}
Let $d \in \N{\setminus\{0\}}$, $R>0$, $\vz \in \R^{\N^n_{2d}}$ s.t. $\mM_d(\vz)\succeq0$ {and} $\mM_{d-1}((R^2 - {\|}\cdot{\|}^2)\ \vz) \succeq 0$. Then,
$$ { \min_{|\vk| \leq d} z_{2\vk} \geq 0 \quad \text{and} \quad} \max_{|\vk|\leq 2d}|z_\vk| \leq z_\vO \ \max(1,R^{2d}). $$
\end{lem}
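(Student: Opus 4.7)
The plan is to extract the two inequalities from the two PSD conditions via the classical correspondence between localizing matrices and Riesz functionals, then chain them by an induction and a Cauchy--Schwarz argument.

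\textbf{First claim.} The condition $\mM_d(\vz) \succeq 0$ means precisely that $\mL_\vz(p^2) \geq 0$ for every $p \in \R_d[\vx]$. Taking $p(\vx) = \vx^{\vk}$ with $|\vk| \leq d$ immediately gives $z_{2\vk} = \mL_\vz(\vx^{2\vk}) \geq 0$, which is the first part of the claim.

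\textbf{Second claim, even multi-indices.} The second PSD condition reads $\mL_\vz\bigl((R^2 - \|\vx\|^2)\, p^2\bigr) \geq 0$ for every $p \in \R_{d-1}[\vx]$. Plugging in $p(\vx) = \vx^{\vk}$ with $|\vk| \leq d-1$ and expanding $\|\vx\|^2 = \sum_{i=1}^n x_i^2$ yields
$$ R^2 \, z_{2\vk} \;\geq\; \sum_{i=1}^n z_{2\vk + 2\ve_i}, $$
where $\ve_i$ is the $i$-th canonical basis vector. Since every term on the right is already known to be nonnegative by the first claim, each one is individually bounded by $R^2 z_{2\vk}$. A straightforward induction on $|\vk|$, peeling off one unit index at a time, then yields $z_{2\vk} \leq R^{2|\vk|}\, z_{\vO}$ for every $|\vk| \leq d$.

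\textbf{Second claim, arbitrary multi-indices.} For $|\vk| \leq 2d$, decompose $\vk = \valpha + \vbeta$ with $|\valpha|, |\vbeta| \leq d$ (which is always possible: list the $|\vk|$ unit-contributions of $\vk$ and split them in two halves of sizes $\lfloor |\vk|/2 \rfloor$ and $\lceil |\vk|/2 \rceil$, both bounded by $d$). The bilinear form $(p,q) \mapsto \mL_\vz(pq)$ on $\R_d[\vx]$ is symmetric positive semidefinite by $\mM_d(\vz) \succeq 0$, so the Cauchy--Schwarz inequality applies to $p = \vx^{\valpha}$ and $q = \vx^{\vbeta}$:
$$ |z_\vk|^2 = |\mL_\vz(\vx^{\valpha} \vx^{\vbeta})|^2 \leq \mL_\vz(\vx^{2\valpha}) \cdot \mL_\vz(\vx^{2\vbeta}) = z_{2\valpha}\, z_{2\vbeta} \leq R^{2|\valpha|+2|\vbeta|}\, z_\vO^2 = R^{2|\vk|}\, z_\vO^2. $$
Taking square roots gives $|z_\vk| \leq R^{|\vk|}\, z_\vO$, and since $|\vk| \leq 2d$ implies $R^{|\vk|} \leq \max(1, R^{2d})$ (discuss $R \geq 1$ or $R \leq 1$), the uniform bound $|z_\vk| \leq z_\vO \max(1, R^{2d})$ follows.

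\textbf{Expected difficulty.} None of the steps is truly delicate; the only minor subtlety is the clean decomposition $\vk = \valpha + \vbeta$ with $|\valpha|, |\vbeta| \leq d$ whenever $|\vk| \leq 2d$, and making sure the induction on even multi-indices remains within the admissible degree $d-1$ when applying the localizing constraint. Both are routine once spelled out.
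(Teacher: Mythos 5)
Your proof is correct and follows essentially the same route as the paper's: test the two PSD conditions against monomials, induct on $|\vk|$ via the ball localizing constraint to bound the even moments $z_{2\vk} \leq R^{2|\vk|} z_\vO$, then split an arbitrary $\vk$ into two halves of degree at most $d$ and use positivity of the moment matrix quadratic form. The only cosmetic difference is that you invoke Cauchy--Schwarz for the bilinear form $(p,q)\mapsto \mL_\vz(pq)$ where the paper expands $\mL_\vz((\vx^{\vk'}\pm\vx^{\vk''})^2)\geq 0$ directly; these are the same argument, yours yielding the marginally sharper intermediate bound $|z_\vk|\leq R^{|\vk|}z_\vO$ before both land on the stated conclusion.
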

\begin{proof}
$\mM_d(\vz) \succeq 0$ is equivalent to
\begin{equation} \label{eq:mom} \tag{a}
\forall p \in \R_d[\vx], \mL_\vz(p^2) \geq 0,
\end{equation}
while
$\mM_{d-1}((R^2 - {\|}\cdot{\|}^2)\ \vz) \succeq 0$ means that
\begin{equation} \label{eq:loc} \tag{b}
\forall p \in \R_{d-1}[\vx], \mL_\vz((R^2 - {\|}\cdot{\|}^2) \ p^2) \geq 0.
\end{equation}
\eqref{eq:mom} with $p(\vx) = \vx^\vk$, $|\vk| \leq d$ yields $z_{2\vk} \geq 0$.

\eqref{eq:loc} with $p(\vx)=1$ yields $R^2z_\vO \geq \sum_{|\vk|=1} z_{2\vk}$, since ${\|}\vx{\|}^2 =\sum_{j=1}^n x_j^2 = \sum_{|\vk|=1} \vx^{2\vk}$.
Hence, since the  $z_{2\vk}$ are nonnegative, one has $|\vk|=1 \Rightarrow z_{2\vk} \leq R^2z_\vO$.

Going forward, if $|\vk|=1$, \eqref{eq:loc} with $p(\vx)=\vx^{\vk}$ yields $R^2z_{2\vk} \geq \sum_{|\vk'|=1} z_{2(\vk+\vk')}$
with $z_{2(\vk+\vk')} \geq 0$ by \eqref{eq:mom}, so that $R^4 z_\vO \geq R^2z_{2\vk} \geq z_{2(\vk+\vk')}$ as long as $|\vk| = |\vk'| = 1$, and thus, if $|\vk|=2$, $R^4 z_\vO \geq z_{2\vk}$. By induction, one has for 
$\vk\in\N^n_d$ that
\begin{equation}\label{eq:even}
0 \leq z_{2\vk} \leq R^{2|\vk|}z_\vO \leq z_\vO \ \max(1,R^{2d}). \tag{c}
\end{equation}
Let $\vk,\vk' \in \N^n_{d}$. Then, \eqref{eq:mom} with $p(\vx) = \vx^\vk \pm \vx^{\vk'}$ yields $0 \leq \mL_\vz(p^2) = z_{2\vk} \pm 2z_{\vk+\vk'} + z_{2\vk'}$ so that
\begin{equation} \label{eq:odd}
|z_{\vk+\vk'}| \leq \frac{z_{2\vk} + z_{2\vk'}}{2} \leq \max(z_{2\vk},z_{2\vk'}) \stackrel{\eqref{eq:even}}{\leq} z_\vO \ \max(1,R^{2d}). \tag{d}
\end{equation}
Eventually, any $\vk \in \N^n_{2d}$ can be written $\vk = \vk'+\vk''$ with $\vk',\vk''\in\N^n_d$, so that it satisfies \eqref{eq:odd}, and $|z_\vk| \leq z_\vO \ \max(1,R^{2d})$.
\end{proof}
This lemma proves several important facts, among which any nonzero feasible pseudo-moment vector $\vz_i$ for \eqref{opt:mom} satisfies $z_{i,\vO} > 0$ if Assumption \ref{asm:ball} holds, and under Assumptions \ref{asm:ball} and \ref{asm:bound}, for all $d \in \N$, $\vz_i$ feasible for \eqref{opt:mom} satisfies $|z_{i,\vk}| \leq C_i$ $\forall \vk$.

\begin{rem}[Relaxing Assumption \ref{asm:ball}]
\begin{itemize}
\item[]
\end{itemize}
Actually, this lemma still holds if one replaces Assumption  \ref{asm:ball} with the Archimedean property. Indeed, suppose that $R^2 - \|\cdot\|^2 \in \Sigma(\vg)$ for some $R>0$. Then, there exists $\bsigma \in \Sigma[\vx]^m$ such that 
$$R^2 - \|\cdot\|^2 = \bsigma \cdot \vg = \sum_{i=1}^m \sigma_i \ g_i = \sum_{i=1}^m \sum_{j=1}^{k_i} p_{i,j}^2 \ g_i.$$
Then, let $D := \max_{i,j} \lceil \deg (p_{i,j}^2)/2 \rceil$ and for $i \in \N_m^\star$ let $d_i := d + D - 1$ and suppose $\mM_{d_i}(g_i \, \vz) \succeq 0$. In that case, for $q \in \R_{d-1}[\vx]$, one recovers \eqref{eq:loc}:
$$ \mL_\vz((R^2 - \|\cdot\|^2) \ q^2) =  \sum_{i=1}^m \sum_{j=1}^{k_i} \mL_\vz(g_i \ p_{i,j}^2 \ q^2) \geq 0. $$
However, we chose to keep Assumption \ref{asm:ball} instead of the Archimedean property, as the former is much easier to verify in practice and can always be enforced when the latter holds, at the price of adding a redundant ball constraint to the $\bK_i$'s descriptions (see remarks \ref{rem:ball} and \ref{rem:archi}).
\end{rem}

A first consequence of Lemma \ref{lem:locmat} is the strong duality property in the hierarchy.

\begin{prop}[Strong duality in the hierarchy] \label{prop:strongdual}
\begin{itemize}
\item[]
\end{itemize}
Suppose that each relaxation of Problem \eqref{opt:gmpo} has a feasible solution. In that case, under Assumptions \ref{asm:ball} and \ref{asm:bound},
$$ \forall d \geq d_0, \quad \op^d_\GM = \od^d_\GM. $$
Moreover, 
the degree $d$ relaxation \eqref{opt:momN} of \eqref{opt:gmpo} has an optimal solution $\vz_d \in \R^{\N^n_{2d}}$.
\end{prop}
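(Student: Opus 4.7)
The plan is to reduce \eqref{opt:momN} to a standard finite-dimensional semidefinite program, to show that under Assumptions \ref{asm:ball} and \ref{asm:bound} its feasible set is compact, to deduce primal attainment by Weierstrass, and to conclude the equality $\op^d_\GM = \od^d_\GM$ via a classical conic-LP strong duality theorem valid whenever the primal feasible set is nonempty and bounded.

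First, I would exploit Assumption \ref{asm:ball}, which provides $g_1 = 1$ and $g_m = 1 - \|\cdot\|^2$, to observe that every feasible $\vz$ for \eqref{opt:momN} satisfies both $\mM_d(\vz) \succeq 0$ and $\mM_{d-1}((1 - \|\cdot\|^2)\,\vz) \succeq 0$. Lemma \ref{lem:locmat} applied with $R = 1$ then yields $|z_\vk| \leq z_\vO$ for all $|\vk| \leq 2d$, and Assumption \ref{asm:bound} caps $z_\vO \leq C$, so the primal feasible set $\cF_d$ is bounded in $\R^{\N^n_{2d}}$. Since $\cF_d$ is also closed (finite intersection of affine hyperplanes with continuous preimages of closed PSD cones) and nonempty by hypothesis, it is compact. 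The objective $\vz \mapsto \mL_\vz(c)$ being a continuous linear functional on a finite-dimensional space, Weierstrass' theorem delivers the second statement of the proposition, namely existence of an optimal $\vz_d \in \cF_d$ with $\mL_{\vz_d}(c) = \op^d_\GM$.

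For the duality equality $\op^d_\GM = \od^d_\GM$, I would invoke the classical conic-LP strong duality theorem: boundedness of the primal feasible set is equivalent to triviality of its recession cone $\{\vz : \mM_{d - d_j}(g_j\,\vz) \succeq 0,\ \mL_\vz(\varphi_\alpha) = 0\} = \{\vO\}$, which in turn implies closedness of the image of the SDP cone under the linear constraint operator and, through Hahn--Banach separation, the absence of a duality gap together with existence of a dual optimal multiplier (see, e.g., the treatments of conic duality in Ben-Tal and Nemirovski, or Pataki). The main technical obstacle is precisely this closedness-of-image step: while primal compactness and attainment follow almost immediately from Lemma \ref{lem:locmat}, deducing a zero duality gap from mere primal boundedness---without a Slater condition on either side---requires the full conic-duality machinery, and the argument must be organised so that the dual reconstructed from the separation coincides exactly with the SOS relaxation \eqref{opt:sos}.
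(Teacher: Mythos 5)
Your proof is correct and follows essentially the same route as the paper: both reduce \eqref{opt:momN} to a finite-dimensional conic program, use Lemma \ref{lem:locmat} (with $R=1$ from Assumption \ref{asm:ball}) together with Assumption \ref{asm:bound} to bound the feasible set and get primal attainment, and then establish closedness of the image cone $\left\{(\mA_d\vz,\mL_\vz(c)) : \vz\in\cX_d\right\}$ --- the paper via Barvinok's compact-convex-base criterion, you via the equivalent trivial-recession-cone normalization argument --- before invoking the standard conic strong-duality theorem. The only caveat is your incidental claim that a dual optimal multiplier exists, which does not follow from primal boundedness alone (it would require a Slater-type condition), but since the proposition does not assert dual attainment this overclaim is harmless.
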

\begin{proof}
We rely on \cite[{Chapter IV:} Theorem (7.2), Lemma (7.3)]{barvinok} to prove our result. Consider the cone 
$$ \cK_d := \left\{ \left(\mA_d \vz , \mL_\vz(c)\right) : \vz \in \cX_d\right\}, $$
where $\mA_d \vz := (
\mL_\vz(\varphi_\alpha))_{
\alpha \in \A_d}$ and $$\cX_d := \left\{\vz\in\R^{\N^n_{2d}} : \forall j \in \{1,\ldots,m\}, \mM_{d-d_j}(g_j \ \vz) \succeq 0 \right\}.$$

According to \cite[Theorem (7.2)]{barvinok} a sufficient condition for our result to hold requires that $\op^d_\GM < \infty$ and $\cK_d$ is closed. Clearly,
$$\op^d_\GM \leq N_{n,d_0} \left(\max_{\vk\in\N^n} c_\vk\right) \left(\max_{\vz}\max_{\vk\in\N^n_{2d}} z_\vk\right) \leq N_{n,d_0} \left( \max_{\vk\in\N^n} c_\vk \right) C < \infty.$$
Besides, \cite[Lemma (7.3)]{barvinok} states that for $\cK_d$ to be closed, it is sufficient to prove that $\cX_d$ has a compact, convex base, and that
\begin{equation} \label{eq:ker} \tag{$*$}
\forall \vz \in \cX_d, \qquad \left(\mA_d \vz, \mL_\vz(c) \right) = (\vO,0) \Longrightarrow \vz = \vO.
\end{equation}
We first exhibit a compact convex base for $\cX_d$. {A base for a cone is defined as follows:
\begin{dfn}[Convex base of a cone]
Let $\cX$ be a cone, \emph{i.e.} a subset of a real vector space that is invariant under multiplication by nonnegative real numbers. $\cP \subset \cX$ is said to be a \emph{base} of $\cX$ if the map
$$\chi : \left\{\begin{array}{ccc}
(0,+\infty)\x\cP & \longrightarrow & \cX\setminus\{\vO\} \\
(t,\vp) & \longmapsto & t\,\vp
\end{array}\right. $$
is a bijection.
\end{dfn}
We are looking for such a base, with the additional properties that it should be convex (\emph{i.e.} stable by barycenter operation) and compact (\emph{i.e.}, in this finite dimensional context, closed and bounded).
}
Let
$$ \cP_d := \{\vz = (z_\vk)_\vk \in \cX_d : z_\vO = 1 \}. $$

$\cP_d$ is a base of $\cX_d$ in the sense that $\cX_d \setminus \{\vO\}$ is isomorphic to $(0,+\infty)\x\cP_d$ through the bijective application $\chi_d : (t,\vp) \mapsto t \ \vp$, with $\chi_d^{-1}(\vz) = (z_\vO , \vz/z_\vO)$ (using Lemma \ref{lem:locmat} and Assumption \ref{asm:ball}, for any $\mM_d(\vz)$ and $\mM_{d-1}((1 - {\|}\cdot{\|}^2) \ \vz)$ to be simultaneously positive semi-definite with $\vz \neq \vO$, it is necessary that $z_\vO > 0$).

$\cP_d$ is convex. Indeed, let $\vp_1,\vp_2 \in \cP_d$, $t \in [0,1]$, $\widetilde{\vp} := t \ \vp_1 + (1-t) \ \vp_2$. Then, by linearity of the localizing matrix operator,
$$ \forall g \in \R_d[\vx] \ \mM_{d-d_g}(g \ \widetilde{\vp}) = t \ \mM_{d-d_g}(g \ \vp_1) + (1-t) \ \mM_{d-d_g}(g \ \vp_2), $$
where $d_g = \lceil {\deg \,} g / 2 \rceil$, so that its semidefinite positivity is preserved by convex combination, by convexity of $\S^n_+$ for any $n \in \N$. Thus, $\widetilde{\vp} \in \cX_d$. Besides,
$$ \widetilde{p}_\vO = t \ p_{1\vO} + (1-t) \ p_{2\vO} = t + 1 - t = 1 $$
so that $\widetilde{\vp} \in \cP_d$, which proves convexity.

Eventually, we move on to showing compactness of $\cP_d$. According to Lemma \ref{lem:locmat}, the ball constraint in the description of $\bK$ (Assumption \ref{asm:ball}) and the upper bound $z_\vO \leq C$ (Assumption \ref{asm:bound}) yield boundedness of $\cP_d$ for the distance $\mathrm{dist}(\vz,\vz') := \max_{|\vk|\leq2d} |z_\vk - z'_\vk|$.

Finally, $\cP_d$ is closed as the intersection between the level-$1$ set of the continuous function $\vz \mapsto z_\vO$ and the closed cone $\cX_d$. Indeed $\cX_d$ is closed as the pre-image of the closed cone $\left(\S^{N_{n,d}}_+\right)^{m}$ by the (continuous) linear map
$$\vz \mapsto \left(\mM_{d-d_j}(g_j \ \vz)\right)_{j\in\{1,\ldots,m\}}.$$
Since finite dimensional closed bounded sets are compact, this proves that $\cP_d$ is compact.

It remains to prove \eqref{eq:ker}. Let $\vz \in \cX_d$ s.t. $\mA_d \vz = \vO$ and $\mL_{\vz}(c) = 0$. We want to prove that $\vz = \vO$ so that \eqref{eq:ker} holds.

Let $\vz_0 \in \cX_d$ s.t. $\forall 
\alpha \in \A_d$, 
$\mL_{\vz_0}(\varphi_\alpha) = a_\alpha$. Define for $t \geq 0$ $\vz_t := \vz_0 + t \ \vz$. Let $t \geq 0$.

Since $\cX_d$ is a convex cone, $\vz_t \in \cX_d$. In addition, by construction of $\vz$ and linearity of the operator $\vz \mapsto \mL_\vz(\cdot)$, $\forall  
\alpha \in \A_d$, 
$\mL_{\vz_t}(\varphi_\alpha) = \mL_{\vz_0}(\varphi_\alpha) = a_\alpha$, so that Assumption \ref{asm:bound} ensures that $z_{t,\vO} \leq C$. However, $z_{t,\vO} = z_{0,\vO} + t \ z_\vO$.

Combined with the fact that $z_{0,\vO} \geq 0$ (with Lemma \ref{lem:locmat}), this yields that for all $t \geq 0$,
$$ t \ z_\vO \leq C, $$
which is only possible if $z_\vO=0$, \emph{i.e.} if $\vz = \vO$ using again Lemma \ref{lem:locmat}.
\end{proof}

Eventually, we can end this section by proving Theorem \ref{thm:main}. As announced at the beginning of this section, w.l.o.g we actually only do the proof for the case $N = 1$:

\begin{thm}[Convergence of the pseudo-moment sequences] \label{thm:lasconv}
\begin{itemize}
\item[]
\end{itemize}
Under Assumptions \ref{asm:ball},  \ref{asm:bound} and \ref{asm:unique}, there exists a sequence $(\vz_d)_{d \geq d_0}$ of feasible pseudo-moment sequences for the hierarchy \eqref{opt:momN} corresponding to Problem \eqref{opt:gmpo}, s.t. $\mL_{\vz_d}(c) = \op^d_\GM${. Then,} for all $\vk \in \N^n$,
$$ z_{d,\vk} \underset{d\to\infty}{\longrightarrow} \int \vx^\vk \; d\mu^\star(\vx). $$ 
In particular, one has $\op^d_\GM \underset{d\to\infty}{\longrightarrow} \op^\star_\GM$.

Moreover, this automatically yields strong duality $\op^d_\GM = \od^d_\GM$ {and} $\op^\star_\GM = \od^\star_\GM$ and then dual convergence $\od^d_\GM \underset{d\to\infty}{\longrightarrow} \od^\star_\GM$.
\end{thm}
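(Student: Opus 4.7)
\textbf{Proof plan for Theorem \ref{thm:lasconv}.} The existence of optimal sequences $(\vz_d)_{d\geq d_0}$ is already furnished by Proposition \ref{prop:strongdual}. The strategy is then the standard ``bounded moment extraction plus uniqueness'' scheme. First I would combine Lemma \ref{lem:locmat} with Assumptions \ref{asm:ball} and \ref{asm:bound} to show that the optimal sequences are uniformly bounded component-wise: since the ball constraint $1-\|\cdot\|^2 \geq 0$ is part of the description of $\bK$, Lemma \ref{lem:locmat} (applied with $R=1$) yields $|z_{d,\vk}| \leq z_{d,\vO} \leq C$ for every $\vk \in \N^n_{2d}$. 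This bound is uniform in $d$, which is the critical point that allows compactness.

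Next, I would run a diagonal (Cantor) extraction on the countable family of components to extract a subsequence $(\vz_{d_\ell})_{\ell}$ such that $z_{d_\ell,\vk} \to z^\infty_\vk$ for every $\vk \in \N^n$. By continuity of the linear maps $\vz \mapsto \mM_{r}(g_j\,\vz)$ (on each finite-dimensional truncation) and closedness of $\S^n_+$, the pointwise limit $\vz^\infty$ satisfies $\mM_r(g_j\,\vz^\infty) \succeq 0$ for every $r \in \N$ and every $j$. Putinar's Positivstellensatz (Remark \ref{rem:psatz}), applied under Assumption \ref{asm:ball}, then guarantees the existence of a \emph{unique} measure $\mu^\infty \in \cM(\bK)_+$ whose moment sequence is $\vz^\infty$. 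Passing to the limit in the finitely many (for each fixed degree) equality constraints $\mL_{\vz_{d_\ell}}(\varphi_\alpha) = a_\alpha$ shows that $\int \varphi_\alpha \, d\mu^\infty = a_\alpha$ for every $\alpha \in \A$, so $\mu^\infty$ is feasible for \eqref{opt:gmpo}. Similarly, $\mL_{\vz_{d_\ell}}(c) \to \mL_{\vz^\infty}(c) = \int c\, d\mu^\infty$, and since $\op^{d_\ell}_\GM \geq \op^\star_\GM$ (the relaxation is always an over-approximation of a sup-problem), the limit value $\int c \, d\mu^\infty \geq \op^\star_\GM$, proving optimality.

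At this stage Assumption \ref{asm:unique} forces $\mu^\infty = \mu^\star$, so the limit of the extracted subsequence is independent of the extraction. A standard subsequence-of-subsequence argument (every subsequence of $(\vz_d)$ admits a further subsequence converging to the same $\vz^\star := (\int \vx^\vk \, d\mu^\star)_\vk$) then upgrades pointwise convergence to the full sequence, giving $z_{d,\vk} \to \int \vx^\vk \, d\mu^\star$ for all $\vk$; taking $\vk$ running through the monomials that form $c$ yields $\op^d_\GM \to \op^\star_\GM$. For the dual claims, I would simply chain the inequalities already known: weak duality gives $\od^d_\GM \geq \op^d_\GM$ and $\od^\star_\GM \geq \op^\star_\GM$, Proposition \ref{prop:strongdual} gives $\od^d_\GM = \op^d_\GM$ for each $d$, and the definition of the hierarchy yields $\od^d_\GM \geq \od^\star_\GM$. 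Squeezing $\od^\star_\GM$ between $\op^\star_\GM$ and $\lim \od^d_\GM = \lim \op^d_\GM = \op^\star_\GM$ closes the duality gap in the limit and simultaneously delivers $\od^d_\GM \to \od^\star_\GM$.

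The main technical obstacle I anticipate is the passage from the pointwise limit $\vz^\infty$ to the representing measure $\mu^\infty$: one must carefully verify that the PSD localizing-matrix conditions survive the pointwise limit at every (arbitrarily large) relaxation order, so that Putinar applies globally rather than only up to a truncation. The rest (diagonal extraction, feasibility in the limit, uniqueness promotion to full convergence) is robust once the uniform bound $|z_{d,\vk}| \leq C$ has been obtained, which is precisely the content imported from Lemma \ref{lem:locmat} under Assumptions \ref{asm:ball} and \ref{asm:bound}.
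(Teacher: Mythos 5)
Your proposal is correct and follows essentially the same route as the paper: uniform componentwise bounds from Lemma \ref{lem:locmat} under Assumptions \ref{asm:ball} and \ref{asm:bound}, extraction of a componentwise-convergent subsequence (the paper invokes Banach--Alaoglu on $\ell^\infty(\N^n)=(\ell^1(\N^n))^*$ where you use a diagonal argument, which amounts to the same thing here), passage of the localizing-matrix and equality constraints to the limit, a Putinar-type representation to obtain a feasible measure, optimality by sandwiching with $\op^d_\GM \geq \op^\star_\GM$, uniqueness to promote subsequential to full convergence, and the same weak-duality/strong-duality squeeze for the dual claims. The one small slip is that the tool converting the limiting PSD conditions into a representing measure is Putinar's \emph{Lemma} (the moment-side statement, \cite[Lemma 3.2]{putinar}), not the Positivstellensatz (the SOS-representation statement); Remark \ref{rem:psatz} of the paper distinguishes the two.
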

\begin{proof}
Existence of $(\vz_d)_{d\geq d_0}$ follows from Proposition \ref{prop:strongdual} (using Assumptions \ref{asm:ball}
 and \ref{asm:bound}), so we focus on the proof of convergence. Let $d \in \N$. For $\vk \in \N^n$, define
$$ \hat{z}_{d,\vk} := \left\{\begin{array}{ll}
z_{d,\vk} & \qquad \text{ if } |\vk|\leq 2d, \\ 0 & \qquad \text{ else},
\end{array}\right.$$
so that $\hat{\vz}_d \in \R^{\N^n}$ with
$${\|}\hat{\vz}_d{\|}_{\ell^\infty(\N^n)} := \max_{\vk\in\N^n} |\hat{z}_{d,\vk}| = \max_{|\vk|\leq 2d} |z_{d,\vk}| \stackrel{\text{Lemma \ref{lem:locmat}}}{\leq} z_{d,\vO} \leq C,$$
using again Assumptions \ref{asm:ball} and \ref{asm:bound} {(recall that in particular Assumption \ref{asm:ball} sets $R = 1$ in the application of Lemma \ref{lem:locmat})}.
Then, $(\hat{\vz}_d)_{d\in\N}$ is a uniformly bounded sequence of
$$\ell^\infty(\N^n) := \left\{\vu \in \R^{\N^n} : \max_{\vk\in\N^n} |u_\vk| < \infty \right\},$$
{which is the topological dual of $\ell^1(\N^n) := \left\{\vu \in \R^{\N^n} : \sum_{\vk\in\N^n} |u_\vk| < \infty \right\}$.}
Thus, the Banach-Alaoglu theorem \cite[Theorem 3.16]{brezis} yields a weak-$\ast$ converging subsequence $(\hat{z}_{d_r})_{r\in\N}$: $\exists \, \vz_\infty \in \ell^\infty(\N^n) \ \st \ \forall \vu \in \ell^1(\N^n)$,
$$ \sum_{\vk\in\N^n} u_\vk \ z_{d_r,\vk} \underset{r\to\infty}{\longrightarrow} \sum_{\vk\in\N^n} u_\vk \ z_{\infty,\vk}.$$
In particular, if $\vk \in \N^n$, {$u_\vl := 0$ if $\vl \neq \vk$, $u_\vk = 1$ defines a $\vu \in \ell^1(\N^n)$ for which the above statement becomes} $z_{d_r,\vk} \underset{r\to\infty}{\longrightarrow} z_{\infty,\vk}$.

Thus, what we want to show is that for $\vk \in \N^n$, $z_{\infty,\vk} = \int \vx^\vk \; d\mu^\star(\vx)$ {$=:z^\star_\vk$, where we recall that $\mu^\star$ is the unique solution to \eqref{opt:gmpo} given by Assumption \ref{asm:unique}}.

Let $
\alpha \in \A, j\in\N^\star_m$. Then, for all $d\geq d_0$ and $r \in \N$ large enough, by feasibility of $\vz_{d_r}$ for the relaxation of degree $d_r$, one has

$\begin{array}{ccclclcl}
\bullet & 0 & \preceq & \mM_{d-d_j}(g_j \ \vz_{d_r}) & = & \mM_{d-d_j}(g_j \ \hat{\vz}_{d_r}) & \underset{r\to\infty}{\longrightarrow} & \mM_{d-d_j}(g_j \ \vz_{\infty}) \quad \\
\\
\bullet & a_\alpha & = & \mL_{\vz_{d_r}}(\varphi_\alpha) & = & \mL_{\hat{\vz}_{d_r}}(\varphi_\alpha) & \underset{r\to\infty}{\longrightarrow} & \mL_{\vz_{\infty}}(\varphi_\alpha)
\end{array}$

so that according to Putinar's Lemma {\cite[Lemma 3.2]{putinar}, whose conditions hold by Assumption \ref{asm:ball}}, $\vz_\infty$ is the actual moment sequence of a measure $\mu_\infty$ that is feasible for problem \eqref{opt:gmpo}. Then, one directly has
$$\op^\star_\GM \geq \int c \; d\mu_\infty = \mL_{z_\infty}(c) = \lim_{r\to\infty} \mL_{\hat{z}_{d_r}}(c) = \lim_{r\to\infty} \op^{d_r}_\GM \geq \op^\star_\GM$$
since for any $d \geq d_0$ $\op^d_\GM \geq \op^\star_\GM$ (by construction of the moment \textit{relaxation}).

Hence, $\int c \; d\mu_\infty = \op^\star_\GM$, \emph{i.e.} $\mu_\infty$ is \emph{optimal} for problem \eqref{opt:gmpo}. By Assumption \ref{asm:unique}, this yields $\mu_\infty = \mu^\star$, \emph{i.e.} $\vz_\infty = \vz^\star$. Thus, $(\hat{\vz}_d)_d$ is bounded and has a unique weak-$\ast$ accumulation point $\vz^\star$, which means that for any $\vk \in \N^n$,
$$ z_{d,\vk} \underset{d\to\infty}{\longrightarrow} z^\star_\vk = \int \vx^\vk \; d\mu^\star(\vx). $$

Eventually, since Assumptions \ref{asm:ball} and \ref{asm:bound} hold, Proposition \ref{prop:strongdual} ensures strong duality $\op^d_\GM = \od^d_\GM$, so that putting together weak GMP duality and the strenghtening property, one has
$$ \op^\star_\GM \stackrel{\text{weak duality}}{\leq} \od^\star_\GM \stackrel{\text{strenghtening}}{\leq} \od^d_\GM \stackrel{\text{strong duality}}{=\vphantom{\leq}} \op^d_\GM \stackrel{\text{convergence}}{\underset{d\to\infty}{\longrightarrow}\vphantom{\leq}} \op^\star_\GM $$
and the sandwich rule yields strong GMP duality $\op^\star_\GM = \od^\star_\GM$.
\end{proof}

\section{{Discussing the uniqueness} Assumption \ref{asm:unique}}

{In practice, it is quite easy to enforce Assumptions \ref{asm:ball} and \ref{asm:bound}, up to rescaling and addition of mass constraints. However, Assumption \ref{asm:unique} may not always hold (especially in the case of polynomial optimization or PDE solutions), so that it is worth discussing what can be done without this Assumption.}
If Assumption \ref{asm:unique} does not hold, then our contribution reduces to the following result:

\begin{cor}[to Theorem \ref{thm:lasconv}] \label{cor:main}
\begin{itemize}
\item[]
\end{itemize}
Under Assumptions \ref{asm:ball} and \ref{asm:bound}, there exists a \emph{subsequence} $(\vz_{d_r})_{r\in\N}$, where $(d_r)_{r\in\N} \in ([d_0,\infty) \cap \N)^\N$ is strictly increasing, of feasible pseudo-moment sequences for the hierarchy \eqref{opt:momN}, as well as a measure $\mu_\infty \in \cM(\bK)_+$ feasible for Problem \eqref{opt:gmpo}, s.t. $\mL_{\vz_{d_r}}(c) = \op^{d_r}_{\GM}$, $\int c \; d\mu_\infty = \op^\star_{\GM}$ and for all $\vk \in \N^n$,
$$ z_{d_r,\vk} \underset{r\to\infty}{\longrightarrow} \int \vx^\vk \; d\mu_\infty(\vx). $$
In particular, one still has $\op^d_\GM \underset{d\to\infty}{\longrightarrow} \op^\star_\GM$.

Moreover, this automatically yields strong duality $\op^d_\GM = \od^d_\GM$ {and} $\op^\star_\GM = \od^\star_\GM$ and then dual convergence $\od^d_\GM \underset{d\to\infty}{\longrightarrow} \od^\star_\GM$.
\end{cor}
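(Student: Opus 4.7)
The plan is to reproduce the proof of Theorem \ref{thm:lasconv} line by line, excising only the single appeal to Assumption \ref{asm:unique} at the very end. Specifically, Proposition \ref{prop:strongdual} (which uses only Assumptions \ref{asm:ball} and \ref{asm:bound}) already supplies, for each $d\geq d_0$, a pseudo-moment sequence $\vz_d$ attaining $\op^d_\GM$; extending it by zero outside $\N^n_{2d}$ produces $\hat{\vz}_d \in \ell^\infty(\N^n)$ with the uniform bound $\|\hat{\vz}_d\|_{\ell^\infty} \leq C$ via Lemma \ref{lem:locmat} (taking $R=1$ from Assumption \ref{asm:ball}) and Assumption \ref{asm:bound}.

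Banach--Alaoglu then extracts a weak-$\ast$ convergent subsequence $(\hat{\vz}_{d_r})_{r\in\N}$ with limit $\vz_\infty \in \ell^\infty(\N^n)$. Passing to the limit in each localizing matrix condition and each finite-degree equality constraint, Putinar's Lemma identifies $\vz_\infty$ as the moment sequence of some $\mu_\infty \in \cM(\bK)_+$ feasible for \eqref{opt:gmpo}. The familiar sandwich
$$ \op^\star_\GM \geq \int c \; d\mu_\infty = \lim_{r\to\infty} \mL_{\hat{\vz}_{d_r}}(c) = \lim_{r\to\infty} \op^{d_r}_\GM \geq \op^\star_\GM $$
(the last inequality being the defining property of a relaxation) simultaneously delivers optimality of $\mu_\infty$, the pointwise convergences $z_{d_r,\vk} \to \int \vx^\vk \, d\mu_\infty(\vx)$, and subsequential convergence $\op^{d_r}_\GM \to \op^\star_\GM$.

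The only step that genuinely differs from Theorem \ref{thm:lasconv} is the upgrade of subsequential convergence of the real sequence $(\op^d_\GM)_d$ to full convergence, and I expect this to be the sole subtle point: without uniqueness, the weak-$\ast$ limit $\vz_\infty$ generally depends on the extracted subsequence, so pointwise convergence along the full sequence $(\hat{\vz}_d)_d$ is unavailable. The workaround is that the above extraction applies to \emph{any} subsequence of $(\hat{\vz}_d)_d$, and the sandwich forces every accumulation point of the bounded real-valued sequence $(\op^d_\GM)_d$ to equal $\op^\star_\GM$, which is enough to conclude $\op^d_\GM \to \op^\star_\GM$.

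Finally, strong duality $\op^d_\GM = \od^d_\GM$ is exactly the content of Proposition \ref{prop:strongdual}, and the end-of-proof chain
$$ \op^\star_\GM \stackrel{\text{weak duality}}{\leq} \od^\star_\GM \stackrel{\text{strengthening}}{\leq} \od^d_\GM \stackrel{\text{strong duality}}{=\vphantom{\leq}} \op^d_\GM \underset{d\to\infty}{\longrightarrow} \op^\star_\GM $$
combined with the primal convergence just established yields, by the sandwich rule, both strong GMP duality $\op^\star_\GM = \od^\star_\GM$ and the dual convergence $\od^d_\GM \to \od^\star_\GM$.
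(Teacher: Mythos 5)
Your proposal is correct and follows essentially the same route as the paper: the paper's own proof of Corollary \ref{cor:main} simply points back to the proof of Theorem \ref{thm:lasconv}, notes that every step except the identification $\mu_\infty=\mu^\star$ survives without Assumption \ref{asm:unique}, and retains subsequential pseudo-moment convergence together with value convergence and strong duality. Your extra care in upgrading subsequential to full convergence of $(\op^d_\GM)_d$ (every accumulation point of this bounded sequence equals $\op^\star_\GM$; one could also invoke its monotonicity) fills in a detail the paper leaves implicit, but it is the same argument.
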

\begin{proof}
Going back to the proof of Theorem \ref{thm:lasconv}, most arguments are still valid, which lets us with a \emph{subsequence} $(\vz_{d_r})_{r \in \N}$ that converges towards the moments of an \emph{optimal} $\mu_\infty$. The only part that is not valid anymore without Assumption \ref{asm:unique} is the identification between $\mu_\infty$ and $\mu^\star$, and thus the convergence of the whole sequence $(\vz_d)_{d \geq d_0}$. Indeed, in such context, the pseudo-moment sequence might have several accumulation points $\mu^\star_1,\ldots,\mu^\star_M$ (or even an infinity of accumulation points), and either converge to one of them, or oscillate between them. The strong duality and value convergence results being independent from this, the rest of Theorem \ref{thm:lasconv} is unchanged.
\end{proof}

{To illustrate this result}, consider the POP problem $f^\star := \inf_{x \in [0,1]} (1-x)x$, whose value is obviously $f^\star = 0$ with minimizers $x^\star=0$ and $x^\star=1$. We then write the corresponding GMP instance as well as its order $d$ moment relaxation:

\begin{subequations} \label{eq:pop}
\begin{center}
\begin{minipage}{0.45\textwidth}
\begin{alignat}{4}
\op^\star_f := \ && \inf \ & \int (1-x)x \; d\mu(x) \label{opt:pop} \\
&& \st \ & \ \ \ \mu \in \cM([0,1])_+ \nonumber \\
&&&\int 1 \; d\mu(x) = 1, \nonumber \\
\nonumber \\
\nonumber
\end{alignat}
\end{minipage}
\begin{minipage}{0.45\textwidth}
\begin{alignat}{4}
\op^d_f := \ && \ \inf \ & z_1 - z_2 \label{opt:popm} \\
&& \st \ & \ \vz = (z_k)_{0\leq k \leq 2d} \in \R^{\N_{2d}} \nonumber \\
&&& \mM_d(\vz) \succeq 0 \nonumber \\
&&& \mM_{d-1}\left(x(1-x)\vz\right) \succeq 0 \nonumber \\
&&& \mM_{d-1}\left((1 - x^2)\vz\right) \succeq 0 \nonumber \\
&&& z_0 = 1, \nonumber
\end{alignat}
\end{minipage}
\end{center}
\end{subequations}
where $[0,1]$ is described as $[0,1] = \{x \in \R : 1 \geq 0, \ (1-x)x \geq 0 \text{ and } 1-x^2 \geq 0 \}$ so as for Assumption \ref{asm:ball} to hold. Note that constraint $\int 1 \; d\mu = 1$ automatically enforces Assumption \ref{asm:bound} through its moment writing $z_0 = 1$. Now, the obvious solution to problem \eqref{opt:pop} is $\op^\star_f = 0$ with minimizing set $$\bM^\star := \{\mu^\star = t \, \delta_1 + (1-t) \, \delta_0 : t \in [0,1]\}.$$

Indeed, $\bM^\star$ parameterizes all possible probability measures supported on the set $\{0,1\}$ of minimizers of $(1-x)x$ (see figure \ref{fig:graph}). Thus, in this case, Assumption \ref{asm:unique} does not hold.

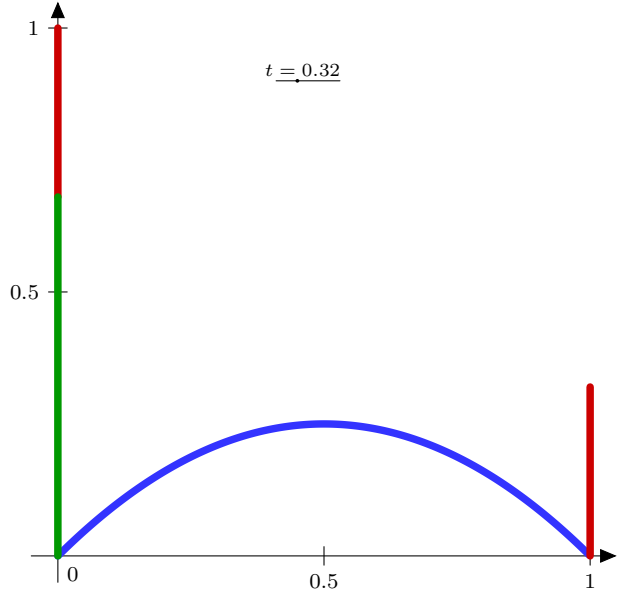
\begin{figure}[h!]

\begin{center}
\begin{tikzpicture}[scale=7,line cap=round,line join=round,>=triangle 45,x=1.0cm,y=1.0cm]
\draw[->,color=black] (-0.05,0) -- (1.05,0);
\foreach \x in {0.5,1}
\draw[shift={(\x,0)},color=black] (0pt,0.5pt) -- (0pt,-0.5pt) node[below] {\footnotesize $\x$};
\draw[->,color=black] (0,-0.05) -- (0,1.05);
\foreach \y in {0.5,1}
\draw[shift={(0,\y)},color=black] (0.5pt,0pt) -- (-0.5pt,0pt) node[left] {\footnotesize $\y$};
\draw[color=black] (0pt,-1pt) node[right] {\footnotesize $0$};
\clip(-0.05,-0.05) rectangle (1.05,1.05);
\draw(0.41,0.9) -- (0.53,0.9);
\draw[line width=2.8pt,color=ttttff, smooth,samples=100,domain=0:1] plot(\x,{(\x)*(1-(\x))});
\draw [line width=2.8pt,color=ccqqqq] (0,1)-- (0,0.68);
\draw [line width=2.8pt,color=qqzzqq] (0,0.68)-- (0,0);
\draw [line width=2.8pt,color=ccqqqq] (1,0.32)-- (1,0);
\begin{scriptsize}
\fill [color=black] (0.45,0.9) circle (0.1pt);
\draw[color=black] (0.46,0.92) node {$t = 0.32$};
\end{scriptsize}
\end{tikzpicture}
\caption{Graph of the function $x \mapsto (1-x)x$ that we minimize.} \label{fig:graph}
\end{center}
We represent a possible optimal measure $\mu^\star = t \, \delta_1 + (1-t) \, \delta_0$ where $t=0.32$ is the length of the red segment and $(1-t) = 0.68$ is the length of the green one.
\end{figure}

{We then proceed to study the moment relaxation \eqref{opt:popm}.

\begin{prop}[Finite convergence]
\begin{itemize}
\item[]
\end{itemize}
For $d \geq 1$, the minimizers of problem \eqref{opt:popm} are in correspondance with the minimizers of problem \eqref{opt:pop}:
$$\bM^d = \left\{ \left( \int x^k \; d\mu^\star(x) \right)_{k \in \N_{2d}} : \mu^\star \in \bM^\star \right\}.$$
This is called a \emph{finite convergence} phenomenon, and it is very common with POP moment hierarchies \cite{finiteconv}.
\end{prop}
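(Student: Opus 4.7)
The plan is first to pin down the optimal value $\op^d_f = 0$ and then to show that every minimizer $\vz \in \bM^d$ has the form $(1, t, t, \ldots, t)$ with $t \in [0, 1]$; this is precisely the degree-$2d$ moment truncation of $\mu^\star = t\,\delta_1 + (1-t)\,\delta_0 \in \bM^\star$, which directly yields the claimed bijection. The equality $\op^d_f = 0$ is immediate: the $(0,0)$-entry of the PSD matrix $\mM_{d-1}(x(1-x)\vz)$ reads $z_1 - z_2 \geq 0$, while the truncated moments of any $t\,\delta_1 + (1-t)\,\delta_0 \in \bM^\star$ realize objective $0$ (feasibility is checked by a Schur complement on $\mM_d(\vz)$ plus direct diagonal computations for the two localizing matrices).

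The core structural step is to extract the form of an arbitrary minimizer. Since $z_1 - z_2 = \op^d_f = 0$ is a vanishing diagonal entry of the PSD matrix $\mM_{d-1}(x(1-x)\vz)$, its entire $0$-th row and column must vanish, which yields $z_1 = z_2 = \cdots = z_{d+1} =: t$. To propagate this equality up to $z_{2d-1}$ I exploit the Hankel structure: writing $a_k := z_{k+1} - z_{k+2}$, every principal $2\times 2$ minor of the localizing matrix is $\begin{pmatrix} a_{2i} & a_{i+j} \\ a_{i+j} & a_{2j} \end{pmatrix} \succeq 0$, so whenever $a_{2i} = 0$, the nonnegative-determinant condition forces $a_{i+j} = 0$. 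Iterating this observation from the initial range $\{0, \ldots, d-1\}$ roughly halves the distance to the target $2d-2$ at each pass and stabilizes at $\{0, \ldots, 2d-3\}$, giving $z_1 = \cdots = z_{2d-1} = t$.

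The main obstacle is that this iteration stops one step short of $z_{2d}$, so I finish by sandwiching it with two auxiliary constraints. The $2\times 2$ minor of $\mM_d(\vz)$ at indices $\{1, d\}$ has determinant $t\,(z_{2d} - t) \geq 0$, giving $z_{2d} \geq t$ when $t > 0$, while the $(d{-}1,d{-}1)$ diagonal entry $z_{2d-2} - z_{2d} = t - z_{2d}$ of $\mM_{d-1}((1-x^2)\vz)$ is nonnegative, giving $z_{2d} \leq t$. The corner case $t = 0$ is handled by noting that $\mM_d(\vz)$ then reduces to $\mathrm{diag}(1, 0, \ldots, 0, z_{2d})$, whose PSD property combined with $z_{2d} \leq 0$ (from the same localizing diagonal) forces $z_{2d} = 0 = t$. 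Finally, the range $t \in [0,1]$ is just the minor $\det \begin{pmatrix} 1 & t \\ t & t \end{pmatrix} = t(1-t) \geq 0$ of $\mM_d(\vz)$, closing the characterization $\bM^d = \{(1, t, \ldots, t) : t \in [0,1]\}$ and the desired bijection with $\bM^\star$.
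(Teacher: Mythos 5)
Your proof is correct, but it takes a genuinely different route from the paper's. The paper packages the whole argument into a reduction to Lemma \ref{lem:locmat}: it defines the shifted difference vector $u_k := z_{k+1}-z_{k+2}$, observes $\mM_{d-1}(\vu)=\mM_{d-1}(x(1-x)\vz)\succeq 0$, and proves $\mM_{d-2}((1-x^2)\vu)\succeq 0$ via the identity $(1-x^2)(1-x)x = x(1-x)(1-x)^2 + 2x^2(1-x)^2$; Lemma \ref{lem:locmat} applied to $\vu$ then gives the quantitative bound $\max_k |z_{k+1}-z_{k+2}| \le u_0 = z_1-z_2$ for \emph{every} feasible $\vz$, which simultaneously yields $\op^d_f\ge 0$ and, at optimality, forces all differences (including $z_{2d-1}-z_{2d}$) to vanish in one stroke. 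You instead argue directly on the PSD matrices: a vanishing diagonal entry of $\mM_{d-1}(x(1-x)\vz)$ kills its row, the $2\times 2$ minors $a_{2i}a_{2j}\ge a_{i+j}^2$ propagate the vanishing, and a separate sandwich (a minor of $\mM_d(\vz)$ against a diagonal entry of $\mM_{d-1}((1-x^2)\vz)$, with the $t=0$ corner case) recovers the top moment $z_{2d}$ that the iteration misses. Your version is more elementary and self-contained (no reuse of Lemma \ref{lem:locmat}, no algebraic identity to guess), at the cost of the fixed-point bookkeeping for the index set stabilizing at $2d-3$ and the extra patch for $z_{2d}$; the paper's version is shorter, uniform in all entries, and produces an inequality valid off the optimum, not just a characterization of minimizers. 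Both correctly land on $\bM^d=\{(1,t,\dots,t): t\in[0,1]\}$ and the identification with $\bM^\star$.
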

}
\begin{proof}
{Let} $d \geq 1$. Let $\vz = (z_k)_{0\leq k\leq 2d} \in \R^{\N_{2d}}$ be feasible for \eqref{opt:popm}. For $k \in \N_{2d-2}$, we define
$$ u_k := z_{k+1} - z_{k+2}, $$
which yields a new {vector} $\vu = (u_k)_{0 \leq k \leq 2d-2}{ \in \R^{2d-1}}$.

Then, $\mM_{d-1}(\vu) = \mM_{d-1}\left(x(1-x)\vz\right) \succeq 0$. We next prove that $\mM_{d-2}\left((1-x^2)\vu\right) \succeq 0$. Let $p \in \R_{d-2}[\vx]$.
\begin{align*}
\mL_\vu\left((1-x^2)p^2\right) & = \mL_\vz\left((1-x^2)p^2(1-x)x\right) \\
& = \mL_\vz\left((1+x)(1-x)^2p^2x\right) \\
& = \mL_\vz\left((1-x+2x)(1-x)^2p^2x\right) \\
& = \underset{\geq 0 \text{ since } \mM_{d-1}(x(1-x)\vz) \succeq 0}{\underbrace{\mL_\vz\left(x(1-x)(1-x)^2p^2\right)}} + 2 \ \underset{\geq 0 \text{ since } \mM_d(\vz) \succeq 0}{\underbrace{\mL_\vz\left(x^2(1-x)^2p^2\right)}} \geq 0,
\end{align*}
which is the definition of $\mM_{d-2}\left((1-x^2)\vu\right) \succeq 0$. Then, applying Lemma \ref{lem:locmat} to $\vu$, we deduce that
\begin{equation} \label{eq:proof}
 0 \leq \max_{1\leq k \leq 2d-1}|z_k - z_{k+1}| = \max_{0 \leq k \leq 2d-2}|u_k| \leq u_0 = z_1 - z_2.
\end{equation}
This proves that $\op^d_f \geq 0$. On the other hand, $\vz = (1,t,\ldots,t) \in \R^{\N_{2d}}$, with $t \in [0,1]$, defines a feasible $\vz$ (corresponding to the moments of optimal measure $\mu^\star = t \, \delta_1 + (1-t) \, \delta_0$) such that $z_1-z_2 = 0$, which proves that $\op^d_f \leq 0$ and thus $\op^d_f = 0$.

Eventually, let $\vz = (z_k)_{0\leq k \leq 2d}$ be feasible \emph{and optimal} for \eqref{opt:popm}: $z_1 - z_2 = 0$. Then, equation \eqref{eq:proof} instantly yields that for all $k \geq 2$, $z_k = z_1$. Moreover, according to Lemma \ref{lem:locmat} applied to $\vz$, for all $k \geq 1$, $z_k = z_2 \in [0,1]$. In other words, for $d \geq 1$ the set of minimizers for problem \eqref{opt:popm} is exactly the set of truncated moment sequences of minimizers for problem \eqref{opt:pop}:
$$ \bM^d = \{(z_k)_{0\leq k\leq 2d} : z_0 = 1, z_1 \in [0,1], \forall k \geq 1, z_k = z_1\} {= \left\{ \left( \int x^k \; d\mu^\star(x) \right)_{k \in \N_{2d}} : \mu^\star \in \mM^\star \right\}}, $$
no matter the size of $d \geq 1$. 
\end{proof}

\begin{rem}[Notions of convergence]
\begin{itemize}
\item[]
\end{itemize}
Here it is important to distinguish between two different notions of convergence:
\begin{itemize}
\item \emph{Finite convergence} of the hierarchy denotes the aforementioned phenomenon, when the minimizers of a finite degree relaxation all correspond to minimizers of the original GMP;
\item One talks about \emph{pseudo-moment sequence convergence} when any sequence $(\vz_d)_d$ of minimizers for \eqref{opt:momN} converges to the sequence of moments of a minimizer for \eqref{opt:gmpo}.
\end{itemize}
Theorem \ref{thm:main} ensures pseudo-moment sequence convergence under Assumptions \ref{asm:ball}--\ref{asm:unique}. Corollary \ref{cor:main} states that removing Assumption \ref{asm:unique} leads to having pseudo-moment sequence convergence \emph{only up to a subsequence $(\vz_{d_r})_r$.}
\end{rem}

{In our illustrative example one indeed has finite convergence, but} there is no \emph{a priori} reason that each step of the hierarchy will return an optimal pseudo-moment sequence corresponding to the same optimal measure, and one could have for example that for $d \geq 1$,
$$ \vz_d = \left(1, d \ \mathrm{ mod } \ 2, \ldots, d \ \mathrm{ mod } \ 2 \right) \in \R^{\N_{2d}}, $$
where $d \ \rm mod \ 2 = \left\{ \begin{array}{l}
1 \ \text{if} \ d \ \text{is odd} \\
0 \ \text{if} \ d \ \text{is even}
\end{array} \right.$ so that $\vz_d$ would be the truncated moment sequence corresponding to optimal solution $\mu^\star_d = (d \, \rm mod \, 2) \, \delta_1 + (1 - d \, \rm mod \, 2) \, \delta_0 = \delta_{d \, \rm mod \, 2}$.

Such a $(\vz_d)_d$ would obviously not converge when the degree $d$ tends to infinity.

{A natural question that arises then is: how does one modify the original GMP or the moment relaxations in order to retrieve pseudo-moment sequence convergence?}

A trivial way to fix this issue is to fix the value of $z_1$ in the constraints of the GMP:
\begin{subequations}
\begin{center}
\begin{minipage}{0.45\textwidth}
\begin{alignat}{4}
\op^\star_t := \ && \inf \ & \int (1-x)x \; d\mu(x) \label{opt:popfix} \\
&& \st \ & \ \ \ \mu \in \cM([0,1])_+ \nonumber \\
&&&\int 1 \; d\mu(x) = 1 \nonumber \\
&&&\int x \; d\mu(x) = t, \nonumber \\
\vphantom{\sum} \nonumber
\end{alignat}
\end{minipage}
\begin{minipage}{0.45\textwidth}
\begin{alignat}{4}
\op^d_t := \ && \ \inf \ & z_1 - z_2 \label{opt:popmfix} \\
&& \st \ & \ \vz = (z_k)_{0\leq k \leq 2d} \in \R^{\N_{2d}} \nonumber \\
&&& \mM_d(\vz) \succeq 0 \nonumber \\
&&& \mM_{d-1}\left(x(1-x)\vz\right) \succeq 0 \nonumber \\
&&& \mM_{d-1}\left((1 - x^2)\vz\right) \succeq 0 \nonumber \\
&&& z_0 = 1 \nonumber \\
&&& z_1 = t, \nonumber
\end{alignat}
\end{minipage}
\end{center}
\end{subequations}
for a given $t \in [0,1]$. Then, the only solution to \eqref{opt:popfix} is $$ \mu^\star = t \, \delta_1 + (1-t) \, \delta_0,$$
so that Assumption \ref{asm:unique} holds and one retrieves { pseudo-moment sequence} convergence (and finite convergence {still holds}), as the unique minimizer for \eqref{opt:popmfix} is
$$ \vz_d = (1,t,\ldots,t) = \left(\int x^k \; d\mu^\star(x)\right)_{0\leq k \leq 2d} \in \R^{\N_{2d}}. $$

\begin{rem}[Enforcing uniqueness]
\begin{itemize}
\item[]
\end{itemize}
It is not always possible to enforce Assumption \ref{asm:unique} as we did here without knowing the GMP solution in advance.
{Another common heuristics to obtain a unique solution {in the relaxations} consists in adding a penalty in the cost function: instead of minimizing $z_1 - z_2$ in \eqref{opt:popm}, one could minimize $z_1 - z_2 + \epsilon \, \mathrm{Tr} \, \mM_d(\vz)$, for some $\epsilon \in \R\setminus\{0\}$. Then, depending on the sign of $\epsilon$, one would obtain a unique solution $\vz_d = (1,\ldots,1)$ ($\epsilon < 0$) corresponding to $\mu^\star = \delta_1$, or $\vz_d = (1,0,\ldots,0)$ ($\epsilon>0)$ corresponding to $\mu^\star = \delta_0$.}
\end{rem}

We finally get to numerically implement the moment hierarchy corresponding to our illustrating example. In {the ``fixed''} cases, the behavior of the SDP solver is fixed, up to numerical errors. The interesting case is then problem \eqref{eq:pop}. We implemented the corresponding hierarchy of moment problems \eqref{opt:popm} with increasing relaxation degree $d \in \N^\star_{20}$. First, not displayed is the fact that we indeed invariably obtained $z_0 = 1.0000$ and $z_k = z_1$ (up to solver precision) for all $k \geq 2$. We implemented the hierarchy in two different frameworks:
\begin{itemize}
\item GloptiPoly~\cite{glopti}, a Matlab toolbox that models moment problems, interfaced with the SDP solvers SeDuMi (see the numerically computed values of $z_1$ in Table \ref{tab:sedumi}) and Mosek (see Table \ref{tab:mosek});
\item MomentOpt~\cite{momentopt}, an open source Julia module, interfaced with the SDP solver CSDP (see the numerically computed values of $z_1$ in Table \ref{tab:julia}).
\end{itemize}

\begin{table}[h!]

\begin{center}
\subfloat[with GloptiPoly and SeDuMi.\label{tab:sedumi}]{
\begin{tabular}{|c|c|c|c|c|c|c|c|c|c|c|}
\hline
$d$ & 1 & 2 & 3 & 4 & 5 & 6 & 7 & 8 & 9 & 10 \\
\hline
$z_1$ & 0.3974 & 0.3094 & 0.2228 & 0.2020 & 0.2103 & 0.1960 & 0.1440 & 0.1400 & 0.1369 & 0.0953 \\
\hline
\hline
$d$ & 11 & 12 & 13 & 14 & 15 & 16 & 17 & 18 & 19 & 20 \\
\hline
$z_1$ & 0.1088 & 0.1116 & 0.0982 & 0.0937 & 0.0851 & 0.0762 & 0.0716 & 0.0674 & 0.0649 & 0.0620 \\
\hline
\end{tabular}
}

\subfloat[with GloptiPoly and Mosek.\label{tab:mosek}]{
\begin{tabular}{|c|c|c|c|c|c|c|c|c|c|c|}
\hline
$d$ & 1 & 2 & 3 & 4 & 5 & 6 & 7 & 8 & 9 & 10 \\
\hline
$z_1$ & 0.3259 & 0.3034 & 0.1782 & 0.1312 & 0.1092 & 0.1236 & 0.0976 & 0.0898 & 0.0861 & 0.0830 \\
\hline
\hline
$d$ & 11 & 12 & 13 & 14 & 15 & 16 & 17 & 18 & 19 & 20 \\
\hline
$z_1$ & 0.0763 & 0.0744 & 0.0666 & 0.0611 & 0.0564 & 0.0528 & 0.0499 & 0.0473 & 0.0450 & 0.0428 \\
\hline
\end{tabular}
}

\subfloat[with MomentOpt and CSDP.\label{tab:julia}]{
\begin{tabular}{|c|c|c|c|c|c|c|c|c|c|c|}
\hline
$d$ & 1 & 2 & 3 & 4 & 5 & 6 & 7 & 8 & 9 & 10 \\
\hline
$z_1$ & 0.3741 & 0.0553 & 0.0772 & 0.0340 & 0.0418 & 0.0280 & 0.0169 & 0.0148 & 0.0121 & 0.0137 \\
\hline
\hline
$d$ & 11 & 12 & 13 & 14 & 15 & 16 & 17 & 18 & 19 & 20 \\
\hline
$z_1$ & 0.0079 & 0.0107 & 0.0082 & 0.0078 & 0.0071 & 0.0074 & 0.0075 & 0.0071 & 0.0059 & 0.0057 \\
\hline
\end{tabular}
}
\caption{Numerical solutions of problem \eqref{opt:popm}} \label{tab:results}
\end{center}
\end{table}

At first glance it would seem that the sequence $(z_{d,1})_d$ decreases when $d$ increases, but looking at specific values shows that it is not the case. What we can deduce from Table \ref{tab:results} is that the sequence $(z_{d,1})_d$ seems to converge to $0$, so that $(\vz_d)_d$ would converge towards $\mu_\infty = \delta_0$, but we have no proof that it is actually the case. Moreover, the sequence $\vz_d = (1,0,\ldots,0)$ is already feasible for the degree $d = 1$ relaxation, so that there is no reason, if the tendency were indeed to go to $z_1 = 0$, that such constraint is not satisfied already in the first steps of the hierarchy. Also, it is important to notice that the results we obtained are stable, meaning that running the same code always returns the same values for $z_1$. In other words, the obtained values \emph{only depend on the solver}. These tendencies then depend on how the interior point algorithm of the solver is implemented, which is out of the scope of the present work.
\color{black}

\section{Conclusion}

While usually it is only proved that the optimal values for the moment relaxations of the GMP monotonically converge towards the optimal value of the GMP, this note proves that the solutions of these moment relaxations converge to the actual solution of the GMP in $\ell^\infty$'s weak-$\ast$ topology, provided that some elementary assumptions hold. So far, such proof has only been provided for very specific instances of the GMP (see e.g. {\cite[Theorem 5.6(b)]{lasserre}}, \cite{volume}), while here the generic case is completely dealt with. Up to rescaling the problem (which also precludes ill behaviours in the numerical implementations), one can usually enforce quite easily that these conditions are met. As a byproduct of this proof, one also obtains strong duality both in the GMP and its corresponding hierarchy, generalizing the results of \cite{joszdual}. Among the open questions this note answers, one can cite the weak-$\ast$ convergence of the moment sequences associated to the OCP \cite{trelat} and set approximation problems in \cite{roa,inmpi,reachable}, as well as the strong duality between the GMP formulation in \cite{burgers} and its dual, regardless of the formulation of such dual. Finally, this note should facilitate most proofs of convergence and strong duality related to future applications of Lasserre's moment-SOS hierarchy.

\ifdefined\inmaster\else
\bibliographystyle{plain}
\bibliography{phd}
\end{document}
\fi